\def\mymathfont{\mathbf}
\newcommand{\C}{\mymathfont{C}}
\newcommand{\Z}{\mymathfont{Z}}
\newcommand{\BW}{\mathop{\mathrm{BW}}\nolimits}
\newcommand{\CP}{\mathop{\mymathfont{C}\mathrm{P}}\nolimits}
\newcommand{\Ric}{\mathop{\mathrm{Ric}}\nolimits}
\renewcommand{\section}{\@startsection%
{section}
{1}
{0mm}
{1.5\bigskipamount}
{0.5\bigskipamount}
{\centering\normalsize\sc}}
\renewcommand{\paragraph}{\@startsection%
{paragraph}
{4}
{0mm}
{\bigskipamount}
{-1.25ex}
{\normalsize\sl}}
\def\provedboxcontents#1{$\square$}
\newtheoremstyle{thm}{}{}{\slshape}{}{\scshape}{.}{0.5em}{}
\newtheoremstyle{def}{}{}{}{}{\scshape}{.}{0.5em}{}
\newtheoremstyle{rmk}{}{}{}{}{\scshape}{.}{0.5em}{}
\newtheoremstyle{claim}{}{}{}{}{\slshape}{.}{0.5em}{}
\theoremstyle{thm}
\newtheorem{newstatement}{newstatement}
\newtheorem{lemma}{Lemma}
\newtheorem{theorem}[newstatement]{Theorem}
\newtheorem{corollary}{Corollary}
\newtheorem{proposition}{Proposition}
\newtheorem*{conjecture*}{Conjecture}
\theoremstyle{def}
\newcommand{\K}{K\"{a}hler}
\theoremstyle{rmk}
\newtheorem{remark}{Remark}
\newtheorem{example}[newstatement]{Example}
\theoremstyle{claim}
\let\expandafter\oldproof\csname\string\proof\endcsname
\let\oldendproof\endproof
\renewenvironment{proof}[1][\proofname]{%
  \oldproof[\slshape #1]%
}{\oldendproof}
\let\nothing\varnothing
\let\geq\geqslant
\let\leq\leqslant
\let\epsilon\varepsilon
\renewcommand{\emph}[1]{{\slshape #1}}
\renewcommand{\em}{\sl}
\title[Sasakian immersions into the sphere]{Sasakian immersions into the sphere}
\author{Beniamino Cappelletti--Montano}
\address{Beniamino Cappelletti--Montano, Dipartimento di Matematica e Informatica \\
         U\-ni\-ver\-si\-t\`a di Cagliari, Italy.}
         \email{b.cappellettimontano@unica.it}
\author{Andrea Loi}
\address{Andrea Loi, Dipartimento di Matematica e Informatica \\
         Universit\`a di Cagliari, Italy.}
         \email{loi@unica.it}
\date{}
\keywords{Sasakian; Sasaki--Einstein;  $\eta$-Einstein; Sasakian immersion; \K\ manifolds; \K\ immersions,}
\subjclass[2010]{53C25; 53C55}
\thanks{The  authors were   supported by Prin 2015 -- Real and Complex Manifolds: Geometry, Topology and Harmonic Analysis -- Italy and by Fondazione di Sardegna and Regione Autonoma della Sardegna (Project GESTA). All the authors are  members of INdAM-GNSAGA - Gruppo Nazionale per le Strutture Algebriche, Geometriche e le loro Applicazioni.}
\begin{document}
\begin{abstract}
The aim of this paper is to study  Sasakian immersions  of compact   Sasakian manifolds into the odd-dimensional sphere equipped with the standard Sasakian structure. We obtain a complete classification of such  manifolds  in the Einstein and   $\eta$-Einstein cases when the codimension of the immersion is $4$. Moreover, we exhibit infinite families  of compact Sasakian $\eta$--Einstein manifolds which cannot admit a Sasakian immersion into any  odd-dimensional sphere. Finally, we show that, after possibly performing a ${\mathcal {D}}$-homothetic deformation,  a homogeneous Sasakian manifold
can be Sasakian immersed into some odd-dimensional sphere if and only if  $S$ is regular and either $S$ is simply--connected or its fundamental group is finite cyclic.
\end{abstract}
\maketitle
\section{Introduction}

Sasakian manifolds were introduced by the foundational work of Sasaki \cite{sasaki}  in 1960. A \emph{contact metric manifold} is a contact connected manifold $(S,\eta)$ admitting a Riemannian metric $g$ compatible with the contact structure, in the sense that, defined the $(1,1)$-tensor $\phi$ by $d\eta(X,Y)=2g(X,\phi Y)$, the following conditions are fulfilled
\begin{equation}\label{contactmetric}
\phi^2 X = -X + \eta(X)\xi, \quad g(\phi X, \phi Y) = g(X,Y) - \eta(X)\eta(Y),
\end{equation}
where $\xi$ denotes the \emph{Reeb vector field} of the contact structure, that is the unique vector field on $S$ such that
\begin{equation*}
i_{\xi}\eta=1, \ \ i_{\xi}d\eta=0.
\end{equation*}
Moreover, a contact metric manifold is said to be \emph{Sasakian}  if  the following  integrability condition is satisfied
\begin{equation}\label{normal}
[\phi X, \phi Y] = -d\eta(X,Y)\xi,
\end{equation}
for any vector fields $X$ and $Y$ on $S$. It follows from the definition that $S$ must be of odd dimension, say $2n+1$.
Two Sasakian manifolds $(S_{1},\eta_{1},g_{1})$ and $(S_{2},\eta_{2},g_{2})$ are said to be \emph{equivalent} if there exists a contactomorphism $F: S_{1} \longrightarrow S_{2}$ between them which is also an isometry, i.e.
\begin{equation}\label{equivalence1}
F^{\ast} \eta_{2} = \eta_{1}, \quad F^{\ast}g_{2} = g_{1}.
\end{equation}
One can prove that if   \eqref{equivalence1} holds  then $F$ satisfies also
\begin{equation*}
F_{{\ast}_x}\circ \phi_{1} = \phi_{2}\circ F_{{\ast}_x}, \quad F_{{\ast}_x}\xi_{{1}} = \xi_{{2}}
\end{equation*}
for any $x\in S_1$.
An isometric  contactomorphism $F: S \longrightarrow S$ from a Sasakian manifold $(S,\eta, g)$ to itself will be called a \emph{Sasakian transformation} of $(S,\eta, g)$. A Sasakian manifold is  \emph{homogeneous} if it is acted upon  transitively by its group of Sasakian transformations.

Sasakian geometry can be considered as  the odd-dimensional counterpart of K\"{a}hler geometry. In fact in any contact manifold $(S,\eta)$ one can consider the $1$-dimensional foliation defined by the Reeb vector field. Actually one can prove that this foliation is  transversely K\"{a}hler if and only if $S$ is Sasakian. On the other hand a Sasakian manifold can be also characterized as a Riemannian manifold $(S,g)$ whose metric cone $(S\times \mathbb{R}^{+}, r^{2}g + dr^{2})$ is K\"{a}hler. In particular, one can prove that  $(S,g)$ is Sasaki-Einstein if and only if the corresponding Riemannian cone is Calabi-Yau. The classical example of Sasaki-Einstein manifold is given by the odd-dimensional sphere $\mathbb{S}^{2n+1}$ endowed with  the usual Riemannian metric $g_0$ and the contact form induced by the form $x_{1}dy_{1}-y_{1}dx_{1}+\ldots+x_{n+1}dy_{n+1}-y_{n+1}dx_{n+1}$ on $\mathbb{R}^{2n+2}$. This is called the \emph{standard Sasakian structure} of $\mathbb{S}^{2n+1}$. In all the paper, unless otherwise stated, whenever we speak of the $\mathbb{S}^{2n+1}$ as a Sasakian manifold, we are assuming that it is equipped with the standard Sasakian structure $(\eta_0, g_0)$.

 Sasaki-Einstein manifolds attracted the attention of several authors since it was pointed out their relation with string theory and the so-called Maldacena conjecture (see \cite{maldacena}). In this framework, Gauntlett, Martelli, Sparks and Waldram discovered the first known examples of irregular (see section  below for the definition) Sasaki-Einstein metrics on $\mathbb{S}^{2}\times \mathbb{S}^{3}$ (\cite{martelli1}). We mention also the  work of Boyer, Galicki and Koll\'{a}r \cite{BoyerGalickiKollar} on the existence of non-trivial Sasaki-Einstein metrics on the spheres and to the study by Martelli, Sparks and Yau on the relations between the critical points of Einstein-Hilbert action and Sasaki-Einstein manifolds (\cite{martelli2}).

Let us consider now the foliation defined by the Reeb vector field of a Sasakian manifold $S$. Using the theory of Riemannian submersions one can show that the transverse geometry is K\"{a}hler-Einstein if and only if the Ricci tensor of $S$ satisfies the following equality
\begin{equation}\label{etaeinstein}
\textrm{Ric} = \lambda g + \nu \eta\otimes\eta
\end{equation}
for some constants $\lambda$ and $\nu$. Any Sasakian manifold satisfying \eqref{etaeinstein} is said to be \emph{$\eta$-Einstein}. Notice that in any $\eta$-Einstein Sasakian manifold the Einstein constants are related by
\begin{equation}\label{einstein-constants}
\lambda+\nu = 2n
\end{equation}
(see e.g. \cite{BoyerGalickiMatzeu06}).  Another useful property of $\eta$-Einstein Sasakian manifolds is that, contrary to Sasaki-Einstein ones, they are preserved by $\mathcal{D}_a$-homothetic deformations, that is the change of structure tensors of the form
\begin{equation}\label{Dhom}
\phi_{a}:= \phi, \quad \xi_{a}:=\frac{1}{a}\xi, \quad \eta_{a}:=a \eta, \quad g_{a}:=a g + a(a-1)\eta\otimes\eta
\end{equation}
where $a>0$. This transformations were first considered by Tanno in \cite{Tanno68} and then used in several contexts. One proves (see \cite{Blair2010} and \cite{BoyerGalickiMatzeu06}) that if $(\phi,\xi,\eta,g)$ is a Sasakian $\eta$-Einstein structure on $S$ with Einstein constants $(\lambda,\nu)$, then, for any $a>0$, the deformed structure $\left(\phi_{a},\xi_{a},\eta_{a},g_{a}\right)$ is still a Sasakian $\eta$-Einstein structure with Einstein constants given by
\begin{equation}\label{costanti-Einstein}
\lambda_{a} = \frac{\lambda + 2 - 2a}{a}, \quad \nu_{a}  =2n - \frac{\lambda + 2 - 2a}{a}.
\end{equation}

Combining (\ref{etaeinstein}) and  (\ref{costanti-Einstein}) one sees that  the  $\mathcal{D}_a$-homothetic deformation, with
$a=\frac{\lambda+2}{2(1+n)}$,  takes an  $\eta$-Einstein Sasakian strucuture with $\lambda >-2$ into a Sasaki-Einstein one.

Examples of $\eta$-Einstein Sasakian manifolds with  $\lambda >-2$ are provided by the tangent sphere bundle $T_{1}\mathbb{S}^{m}$ of any sphere $\mathbb{S}^{m}$ (see \cite{Tanno79}). Thus, a  suitable  $\mathcal{D}_a$-homothetic deformation give $T_{1}\mathbb{S}^{m}$ the structure of a homogeneous Sasaki-Einstein manifold. In particular, the standard homogeneous Sasaki-Einstein structure on $\mathbb{S}^{2}\times \mathbb{S}^{3} \simeq T_{1}\mathbb{S}^2$ can be obtained in this way.

\bigskip

In this paper we study the Sasakian immersions of  Sasakian manifolds into the odd dimensional sphere. By a Sasakian immersion of a Sasakian manifold $(S_{1},\eta_{1},g_{1})$ into the Sasakian manifold $(S_{2},\eta_{2},g_{2})$ we mean an isometric immersion $\varphi: (S_{1},g_1)\longrightarrow (S_{2},g_2)$ that preserves the Sasakian structures, i.e. such that
\begin{gather}
\varphi^{\ast}g_{2} = g_{1}, \quad \varphi^{\ast} \eta_{2} = \eta_{1}, \label{sasakianimmersion1} \\
\varphi_{{\ast}}\xi_{{1}} = \xi_{{2}}, \quad \varphi_{{\ast}}\circ \phi_{1} = \phi_{2}\circ \varphi_{{\ast}}.\label{sasakianimmersion2}
\end{gather}
This definition was first considered in the early seventies, under different names, by Okumura (\cite{okumura68}), Harada (\cite{harada72}, \cite{harada73}, \cite{harada73-II}), Kon (\cite{kon73}, \cite{kon76}), who mainly studied some geometric conditions ensuring the immersed manifold to be totally geodesic. However, despite the theory of K\"{a}hler immersions, which has widely developed in the last decades due to the fundamental work of Calabi (see \cite{LoiZedda-book} for an updated review of this topic), there are very few results about Sasakian immersions.  Relapsing some conditions in \eqref{sasakianimmersion1}--\eqref{sasakianimmersion2}, we can mention a recent, remarkable result of Ornea and Verbitsky (\cite{OrneaVerb}). Namely they proved that a compact Sasakian manifold admits a CR-embedding (i.e. an embedding, non necessarily isometric, satisfying \eqref{sasakianimmersion2}) into a Sasakian manifold diffeomorphic to a sphere. On the other hand, Takahashi (\cite{tak}) and Tanno (\cite{tan1}) studied codimension one isometric immersions of a Sasakian manifold $S$ in Riemannian manifolds of constant curvature, proving that, under some assumptions, $S$ is of constant curvature $1$.

As far as the knowledge of the authors, no general results concerning Sasakian immersions into the sphere are known. One of the aims of this paper is to start filling this gap.
We start  by the following two classification results (Theorem
\ref{mainteor1} and  Theorem \ref{mainteor3} and the corresponding corollaries),  dealing with Sasaki-Einstein  manifolds, and Sasakian $\eta$--Einstein manifolds in small codimension, respectively.

\begin{theorem}\label{mainteor1}
Let $S$ be a $(2n+1)$-dimensional compact  Sasaki-Einstein manifold. Assume that there exists a Sasakian immersion of $S$ into $\mathbb{S}^{2N+1}$ for some non-negative integer $N$. Then $S$ is Sasaki equivalent to $\mathbb{S}^{2n+1}$.
\end{theorem}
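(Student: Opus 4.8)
The plan is to show that the Sasakian immersion $\varphi\colon S\longrightarrow\mathbb{S}^{2N+1}$ is forced to be totally geodesic, and then to read off the statement from the rigidity of totally geodesic submanifolds of the round sphere. The first ingredient is the classical fact that \emph{a Sasakian immersion is minimal}, of which I would give a short proof: writing $h$ for the second fundamental form of $\varphi$, the conditions \eqref{sasakianimmersion2} together with the standard structure equations of a Sasakian manifold (in particular $\nabla_{X}\xi=-\phi X$) yield $h(X,\xi)=0$ for every tangent $X$ and $h(\phi X,\phi Y)=-h(X,Y)$ for $X,Y$ orthogonal to $\xi$; evaluating the trace $\sum_{j}h(e_{j},e_{j})$ on a local orthonormal frame of the form $\xi,e_{1},\phi e_{1},\dots,e_{n},\phi e_{n}$ then shows that $\varphi$ is minimal.

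Next I would invoke the Gauss equation. Since $S$ is Sasaki--Einstein it is $\eta$--Einstein with $\nu=0$, so by \eqref{einstein-constants} its Ricci tensor is $\Ric_{S}=2n\,g_{1}$, while $\mathbb{S}^{2N+1}$ has constant sectional curvature $1$. Tracing the Gauss equation over an orthonormal frame $e_{1},\dots,e_{2n+1}$ of $TS$ gives, for every tangent vector $X$,
\[
2n\|X\|^{2}=\Ric_{S}(X,X)=2n\|X\|^{2}+\left\langle h(X,X),\sum_{j}h(e_{j},e_{j})\right\rangle-\sum_{i}\|h(X,e_{i})\|^{2},
\]
and since the middle term vanishes by minimality we obtain $\sum_{i}\|h(X,e_{i})\|^{2}=0$ for all $X$, hence $h\equiv0$: the immersion $\varphi$ is totally geodesic.

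To conclude, a connected totally geodesic submanifold of $\mathbb{S}^{2N+1}$ lies in a great subsphere, so (by a standard connectedness argument) $\varphi$ maps $S$ into a totally geodesic copy of $\mathbb{S}^{2n+1}$, and being an isometric immersion between manifolds of the same dimension it is a local isometry onto it. Since $S$ is compact, and hence complete, $\varphi\colon S\longrightarrow\mathbb{S}^{2n+1}$ is a Riemannian covering; as $\mathbb{S}^{2n+1}$ is simply connected (for $n\ge1$) this covering is trivial, so $\varphi$ is a diffeomorphism, in particular an isometry. A Sasakian immersion which is a diffeomorphism satisfies \eqref{equivalence1}, hence is a Sasakian equivalence, and therefore $S$ is Sasaki equivalent to $\mathbb{S}^{2n+1}$.

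The step I expect to be the main obstacle is the first one --- establishing minimality with the correct signs in the identities for $h$ --- since everything afterwards is routine Riemannian geometry. A more conceptual route, better aligned with the rest of the paper, is to pass to the K\"{a}hler cones: a Sasakian immersion $S\to\mathbb{S}^{2N+1}$ induces a holomorphic isometric immersion of the metric cone $C(S)$ into $C(\mathbb{S}^{2N+1})=\mathbb{C}^{N+1}\setminus\{0\}\subset\mathbb{C}^{N+1}$, and since $S$ is Sasaki--Einstein the cone $C(S)$ is Ricci--flat K\"{a}hler, in particular K\"{a}hler--Einstein; as a K\"{a}hler--Einstein submanifold of $\mathbb{C}^{N+1}$ is totally geodesic (Umehara; see, e.g., \cite{LoiZedda-book}), this again forces $h\equiv0$, and one finishes as above.
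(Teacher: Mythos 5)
Your argument is correct, but it follows a genuinely different route from the paper. The paper passes to the K\"ahler cones: the immersion induces a K\"ahler immersion $C(S)\to\mathbb{C}^{N+1}\setminus\{0\}$, Umehara's theorem forces the Ricci--flat cone to be flat, hence $S$ has constant curvature $1$ and is a space form $\mathbb{S}^{2n+1}/\Gamma$ by Tanno and Wolf; the triviality of $\Gamma$ is then extracted from Calabi's rigidity theorem applied to the two K\"ahler immersions $\mathbb{C}^{n+1}\setminus\{0\}\to\mathbb{C}^{N+1}\setminus\{0\}$. You instead work entirely inside the sphere: minimality of Sasakian (invariant) immersions plus the traced Gauss equation with $\Ric_S=2n\,g_1$ gives $h\equiv 0$ directly, and the covering argument onto the totally geodesic great subsphere replaces the space-form classification and the Calabi rigidity step. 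Your computations check out (the identities $h(X,\xi)=0$ and $h(\phi X,\phi Y)=-h(X,Y)$ are the classical ones from Okumura/Kon, and the trace of the Gauss equation is as you wrote it), and your route is more elementary and self-contained; the paper's cone argument has the advantage of setting up the machinery (cones, K\"ahler immersions, Calabi rigidity) reused in Theorems \ref{mainteor3} and \ref{mainteor4}. Two small points you should make explicit: (i) to conclude that the induced structure on the image great subsphere is the \emph{standard} Sasakian structure, observe that its tangent spaces contain $\xi_2$ and are $\phi_2$-invariant, so the subsphere is the unit sphere of a complex linear subspace of $\mathbb{C}^{N+1}$; (ii) the case $n=0$ is degenerate (as you note, $\mathbb{S}^1$ is not simply connected), though it is equally untreated in the paper's proof.
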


As an  immediate consequence of the theorem one gets:

\begin{corollary}\label{cormainteor1}
The exotic Sasaki--Einstein structures on $\mathbb{S}^{2n+1}$ (\cite{bg})
cannot be induced by a  Sasakian  immersion  into a sphere.
\end{corollary}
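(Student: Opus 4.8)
The plan is to deduce this directly from Theorem~\ref{mainteor1}. Recall that the exotic Sasaki--Einstein structures constructed in \cite{bg} are, by their very nature, Sasaki--Einstein structures carried by \emph{compact} $(2n+1)$-dimensional manifolds which are (homotopy) spheres but which are \emph{not} Sasaki equivalent to the standard Sasakian sphere $(\mathbb{S}^{2n+1},\eta_0,g_0)$: this non-equivalence (whether because the underlying smooth manifold is an exotic sphere, or because the contact-metric data differ from $(\eta_0,g_0)$ up to no Sasakian transformation) is precisely what the adjective ``exotic'' encodes. Thus each such structure furnishes a compact Sasaki--Einstein manifold $S$ in the sense required by the hypotheses of Theorem~\ref{mainteor1}.

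First I would argue by contradiction: suppose one of these exotic Sasaki--Einstein manifolds $S$ admitted a Sasakian immersion $\varphi\colon S\longrightarrow \mathbb{S}^{2N+1}$ into some odd-dimensional sphere endowed with its standard Sasakian structure, for some non-negative integer $N$. Since $S$ is compact and Sasaki--Einstein, Theorem~\ref{mainteor1} applies verbatim and yields that $S$ is Sasaki equivalent to $\mathbb{S}^{2n+1}$ with the standard structure. This contradicts the fact that the structure on $S$ was assumed to be exotic, i.e. not Sasaki equivalent to the standard one. Hence no such immersion can exist.

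The only point that genuinely needs to be recorded is that the structures of \cite{bg} do fit the framework of Theorem~\ref{mainteor1} --- namely that they are compact and satisfy the Einstein condition \eqref{etaeinstein} with $\nu=0$ --- and this holds by their construction; no further computation is required. I therefore do not expect any real obstacle here: the content of the corollary is entirely absorbed into Theorem~\ref{mainteor1}, and the argument is the one-line contrapositive above.
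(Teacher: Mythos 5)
Your argument is correct and is exactly the route the paper takes: the corollary is stated there as an immediate consequence of Theorem~\ref{mainteor1}, obtained by the same contrapositive observation that an induced exotic structure would be forced to be Sasaki equivalent to the standard one. (The paper later also notes an alternative proof via the non-regularity of the exotic structures and Proposition~\ref{lemmareg}, but that is not the primary argument.)
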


A contact metric manifold is said to be \emph{$K$-contact} if the Reeb vector field is Killing. In dimension greater than $3$ this condition is weaker than the Sasakian condition. However, as proven by Boyer and Galicki (\cite{bg1}), and in alternative way by Apostolov, Draghici and Moroianu (\cite{adm}), if the manifold is  compact and  Einstein,  these two notions coincide.  Using this fact and Theorem \ref{mainteor1}, we then  obtain the following:

\begin{corollary}
Let $K$ be a $(2n+1)$-dimensional compact Einstein $K$-contact  manifold. Assume that there exists a contact metric immersion of $K$ into $\mathbb{S}^{2N+1}$ for some non negative integer $N$. Then $K$ is  Sasaki equivalent to $\mathbb{S}^{2n+1}$.
\end{corollary}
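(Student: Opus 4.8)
The plan is to reduce the statement to Theorem~\ref{mainteor1}. The first step is to upgrade the $K$-contact hypothesis to a Sasakian one: by the theorem of Boyer and Galicki (\cite{bg1}), reproved by Apostolov, Draghici and Moroianu (\cite{adm}), every compact Einstein $K$-contact manifold is Sasakian. Hence $K$, being compact, Einstein and $K$-contact, is a compact Sasaki--Einstein manifold (with Einstein constant $\lambda = 2n$, as forced by $\Ric(\xi,\xi)=2n$ on any $K$-contact manifold, though the precise value is irrelevant here).

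The second step is to observe that, once both the domain and the ambient are Sasakian, a contact metric immersion is nothing but a Sasakian immersion. Indeed, the defining conditions of a contact metric immersion are exactly \eqref{sasakianimmersion1}--\eqref{sasakianimmersion2}, written in terms of the canonical structure tensors of the contact metric structures; when these structures happen to be Sasakian, those tensors are precisely the Sasakian structure tensors, and the conditions become literally those of a Sasakian immersion. Thus the given contact metric immersion $\varphi\colon K \to \mathbb{S}^{2N+1}$, with $\mathbb{S}^{2N+1}$ carrying its standard Sasakian structure, is a Sasakian immersion of the compact Sasaki--Einstein manifold $K$ into $\mathbb{S}^{2N+1}$.

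The conclusion is then immediate from Theorem~\ref{mainteor1}: $K$ is Sasaki equivalent to $\mathbb{S}^{2n+1}$. There is no genuine obstacle in this argument; all the substance is contained in the cited equivalence of ``compact Einstein $K$-contact'' with ``Sasaki--Einstein'', and in Theorem~\ref{mainteor1} itself. The only point worth stating explicitly is the harmless identification of the two notions of immersion carried out in the second step, which guarantees that no compatibility hypothesis is lost in passing from ``contact metric immersion'' to ``Sasakian immersion''.
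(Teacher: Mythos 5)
Your proposal is correct and matches the paper's argument exactly: the paper deduces the corollary by invoking the Boyer--Galicki (and Apostolov--Draghici--Moroianu) result that compact Einstein $K$-contact manifolds are Sasakian and then applying Theorem \ref{mainteor1}, which is precisely your reduction. Your explicit remark that a contact metric immersion into the standard Sasakian sphere is then a Sasakian immersion is left implicit in the paper but is a harmless and correct clarification.
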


In order to state Theorem \ref{mainteor3} we recall the Boothby--Wang construction  (see \cite{Boothby-Wang} and Section \ref{BWS}).
To  any regular and compact Sasakian manifold $(S, \eta, g)$ we can associate a
 compact Hodge manifold $M$, namely a compact  \K\ manifold with integral \K\  form $\omega$ (so $M$ is projective algebraic by Kodaira's theorem) and a principal $\mathbb{S}^1$-bundle $\pi:S\rightarrow M$ with connection $\eta$ such that $\pi^*\omega=ad\eta$, for a  constant $a\neq 0$. The manifold $M$  will be called {\em the \K\ manifold correponding to $S$ through the Boothby--Wang construction}. Notice that if   $(S, \eta_a, g_a)$, $a>0$, is obtained by a regular Sasakian manifold $(S, \eta, g)$ through a $\mathcal {D}_a$-homothetic deformation then  $(S, \eta_a, g_a)$ is still regular and its correspoding  \K\ manifold through the Boothby--Wang construction is the same as that of $(S, \eta, g)$.
Conversely, to  any compact Hodge manifold $M$ one can associate a regular compact Sasakian manifold $(S, \eta, g)$ which is the total space of a principal $\mathbb{S}^1$-bundle  over $M$ and  such that $\pi^*\omega=d\eta$.
Also in this case the  manifold $S$  will be called {\em the  Sasakian manifold correponding to $M$ through the Boothby--Wang construction}.
If $M$ is assumed to be simply-connected then  $S$ is unique up to Sasakian transformations and   will be denoted by $S=\BW(M)$ and  called the  {\em Boothby--Wang manifold} corresponding to $M$ (see Proposition \ref{unique} below for a proof).

\begin{theorem}\label{mainteor3}
Let $S$ be a $(2n+1)$-dimensional  compact $\eta$-Einstein Sasakian manifold. Assume that there exists a Sasakian immersion of $S$ into $\mathbb{S}^{2N+1}$. If $N = n+2$ then  $S$ is Sasaki equivalent to  $\mathbb{S}^{2n+1}$ or to $\BW(Q_n)$, where $Q_n\subset \CP^{n+1}$
is the complex quadric equipped with the restriction of the Fubini--Study form of $\CP^{n+1}$.
\end{theorem}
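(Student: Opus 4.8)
The plan is to descend the Sasakian immersion to a K\"ahler immersion of the associated Boothby--Wang K\"ahler manifolds, and then invoke the classification of Einstein K\"ahler submanifolds of complex projective space in small codimension.

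Let $\varphi\colon S\to\mathbb{S}^{2N+1}$ be the given Sasakian immersion, with $N=n+2$. Since $\varphi_{\ast}\xi_{1}=\xi_{2}$ and the Reeb flow of $\mathbb{S}^{2N+1}$ is the Hopf flow, all of whose orbits are closed of equal period, every Reeb orbit of $S$ covers a Hopf circle and is therefore closed; hence $S$ is quasi-regular, and its space of Reeb orbits $M$ is a compact K\"ahler orbifold of complex dimension $n$. Because $\varphi$ maps Reeb orbits of $S$ into Reeb orbits of $\mathbb{S}^{2N+1}$ and, by \eqref{sasakianimmersion1}--\eqref{sasakianimmersion2}, intertwines the transverse complex structures and metrics, it descends to a holomorphic isometric immersion $\bar{\varphi}\colon M\to\CP^{n+2}$, where $\CP^{n+2}$, the space of Reeb orbits of $\mathbb{S}^{2N+1}$, carries the Fubini--Study form $\omega_{FS}$, so that $\mathbb{S}^{2N+1}=\BW(\CP^{n+2})$. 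Moreover $\varphi^{\ast}\eta_{2}=\eta_{1}$ gives $d\eta_{1}=\pi^{\ast}\omega_{M}$ with $\omega_{M}=\bar{\varphi}^{\ast}\omega_{FS}$, so $S$ is exactly the (orbifold) Boothby--Wang space of $(M,\omega_{M})$; and the $\eta$-Einstein hypothesis on $S$ says precisely that the transverse K\"ahler structure $(M,\omega_{M})$ is K\"ahler--Einstein.

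Now $\bar{\varphi}(M)$ has complex codimension $2$ in $\CP^{n+2}$. By Smyth's theorem the only full compact Einstein K\"ahler hypersurface of a $\CP^{m}$ is the quadric, and for complex dimension at least two there is no full compact Einstein K\"ahler submanifold of a $\CP^{m}$ of codimension exactly two; hence $\bar{\varphi}$ cannot be full, and it factors through a linear $\CP^{n+1}\subset\CP^{n+2}$. There $\bar{\varphi}(M)$ is either full --- so that $M=Q_{n}$ and $\omega_{M}$ is the restriction of the Fubini--Study form of $\CP^{n+1}$ --- or factors further through a linear $\CP^{n}$ and there equals $\CP^{n}$ itself, with $\omega_{M}$ the Fubini--Study form. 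In particular $M$ is a smooth manifold, so $S$ is regular, and the Boothby--Wang construction identifies $S$ with $\BW(\CP^{n})=\mathbb{S}^{2n+1}$ or with $\BW(Q_{n})$; this identification is unambiguous by Proposition~\ref{unique}, since $\CP^{n}$ and $Q_{n}$ are simply connected. (If $S$ is moreover Sasaki--Einstein, the Einstein constant of $\omega_{M}$ is forced to equal that of the Fubini--Study metric, which rules out $Q_{n}$ and recovers Theorem~\ref{mainteor1}.)

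The main obstacle is the reduction step when $S$ is only quasi-regular rather than regular: one must either develop the orbifold versions of both the Boothby--Wang correspondence and the above submanifold classification, or argue directly that the Reeb orbits of $S$ all have the period of the Hopf orbits, so that $S$ is regular from the outset. The other delicate input is the codimension-two part of the K\"ahler classification which, unlike Smyth's hypersurface case, rests on a nontrivial analysis of the second fundamental form of an Einstein K\"ahler submanifold.
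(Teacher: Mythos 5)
Your overall strategy is the one the paper follows: push the Sasakian immersion down to a K\"ahler immersion of the quotient $M$ into $\CP^{n+2}$, invoke Tsukada's classification \cite{ts} of codimension-two K\"ahler--Einstein submanifolds to conclude $M=\CP^n$ or $M=Q_n$, and then identify $S$ with $\mathbb{S}^{2n+1}$ or $\BW(Q_n)$ via the uniqueness of the Boothby--Wang manifold over a simply-connected base (Proposition \ref{unique}). Those final steps coincide with the paper's proof.

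The genuine gap is the one you flag yourself: the regularity of $S$. Your argument only attempts quasi-regularity, and even that step is not airtight: from $\varphi_{\ast}\xi_{1}=\xi_{2}$ you get that each Reeb orbit of $S$ is mapped into a Hopf circle, but an immersion need not be injective, so a non-closed orbit can still be mapped onto a circle (as $\mathbb{R}$ covers $\mathbb{S}^{1}$); ``covers a Hopf circle'' does not give ``closed''. More importantly, with only an orbifold quotient $M$ in hand you cannot legitimately apply Tsukada's theorem, which is a statement about smooth K\"ahler--Einstein submanifolds, and your later sentence ``in particular $M$ is a smooth manifold, so $S$ is regular'' is circular: the smoothness of $M$ is deduced from a classification that presupposes it. The paper closes this gap \emph{before} any classification is used, by Lemma \ref{lemmareg0} and Proposition \ref{lemmareg}: a foliated (isometric) immersion into a manifold carrying a \emph{regular} foliation forces the source foliation to be regular, by a local plaque-counting argument in adapted charts --- if a leaf of the Reeb foliation of $S$ met two plaques of a foliated chart, its image leaf would meet two distinct plaques of the image chart, contradicting the regularity of the Reeb foliation of $\mathbb{S}^{2N+1}$. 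No period or monodromy argument is needed. Once regularity is established this way, $M$ is a genuine compact Hodge manifold, the descended map is a K\"ahler immersion into $\CP^{n+2}$ (Proposition \ref{lemmareg} together with Example \ref{esfond}), and the remainder of your argument goes through. A smaller point: the paper quotes Tsukada's codimension-two result directly rather than deriving it from Smyth's hypersurface theorem plus a non-fullness claim, but that is a difference of attribution rather than of substance.
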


Theorem \ref{mainteor3} should be compared with  part i) of the main Theorem by Kenmotsu  in \cite{kenmotsu}, where the same conclusion is proved for $N=n+1$ and when  $S$ is assumed to be  complete and not necessarily compact. For general codimension, due to  the corresponding conjecture in the \K\ case (see \cite[Ch. 4]{LoiZedda-book}),  we believe the validity of the following:

\vskip 0.3cm

\noindent
{\bf Conjecture.} {\em If a  compact  $\eta$-Einstein Sasakian manifold can be Sasakian immersed into a sphere then $S$ is Sasakian equivalent to $\BW(M)$ where $M$ is a simply-connected  compact homogeneous Hodge manifold.}

\vskip 0.3cm

The paper contains  two further results (Theorem \ref{mainteor4} and Theorem \ref{mainteor5}).
In Theorem \ref{mainteor4}  (and its Corollary \ref{mainteor4cor})  we  exhibit infinite families of examples of $\eta$-Einstein Sasakian structures on compact manifolds which can not be  induced by the Sasakian structure of the sphere.
In Theorem \ref{mainteor5}  we prove  that the  sphere $\mathbb{S}^{2N+1}$ is, for a suitable $N$, the Sasakian manifold
where all  regular  compact  homogeneous Sasakian manifolds of the form $\BW (M)$  can be Sasakian immersed.

\begin{theorem}\label{mainteor4}
Let $S$ be a compact regular  Sasakian  $\eta$--Einstein manifold of dimension $2n+1$ with Einstein constant $\lambda < 2n$, according to the notation in \eqref{etaeinstein}.  Then $S$ cannot be Sasakian immersed  into any  sphere.
\end{theorem}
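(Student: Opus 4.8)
The plan is to route the statement through the Boothby--Wang correspondence, reduce it to a cohomological identity for the first Chern class of the associated Hodge manifold, and then control that identity by the Kobayashi--Ochiai bound on the Fano index; the upshot is that a Sasakian immersion into a sphere forces $\lambda\le 2n$, so that a regular $\eta$--Einstein manifold whose Einstein constant lies outside this range is obstructed. First, since $S$ is regular and compact, the Boothby--Wang construction furnishes a circle bundle $p\colon S\to M$ over a compact Hodge manifold $M$, and the $\eta$--Einstein condition with constant $\lambda$ becomes, via the submersion $p$, the transverse \K--Einstein condition $\Ric_M=(\lambda+2)\,g_M$. Given a Sasakian immersion $\varphi\colon S\to\mathbb{S}^{2N+1}$, the defining relations $\varphi_\ast\xi_S=\xi_0$ and $\varphi^\ast\eta_0=\eta_S$ say that $\varphi$ carries $\xi_S$--orbits to $\xi_0$--orbits; since both total spaces are regular it descends to a holomorphic isometric (hence \K) immersion $f\colon M\to\CP^N$, where $\CP^N$ carries the Fubini--Study metric realized as the transverse metric of the standard $\mathbb{S}^{2N+1}$. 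From $\varphi^\ast d\eta_0=d\eta_S$ the transverse \K\ forms agree: $f^\ast\omega_{FS}=\omega_M$.

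The second step is to extract the Chern class identity. The Fubini--Study metric in play has $\Ric_{FS}=2(N+1)g_{FS}$, so $2\pi c_1(\CP^N)=2(N+1)[\omega_{FS}]$ together with $c_1(\CP^N)=(N+1)H$ gives $[\omega_{FS}]=\pi H$, where $H$ is the positive generator of $H^2(\CP^N;\Z)$; pulling back, $[\omega_M]=\pi L$ with $L:=f^\ast H\in H^2(M;\Z)$ an integral ample class. On the other hand the transverse Einstein condition yields $2\pi c_1(M)=[\rho_M]=(\lambda+2)[\omega_M]$. The factor $\pi$ cancels and one obtains
\begin{equation*}
c_1(M)=\frac{\lambda+2}{2}\,L .
\end{equation*}
In particular $M$ is Fano. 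Writing $L=s\,L_0$ with $L_0$ the primitive class proportional to $c_1(M)$ and $s\ge 1$, integrality of $c_1(M)$ forces $\tfrac{(\lambda+2)s}{2}$ to be a positive integer, and in fact the Fano index of $M$ equals $r=\tfrac{(\lambda+2)s}{2}$.

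Finally I would invoke the Kobayashi--Ochiai theorem: the Fano index of an $n$--dimensional Fano manifold satisfies $r\le n+1$, with equality only for $M=\CP^n$ and $r=n$ only for the quadric $M=Q_n$. Combined with $r=\tfrac{(\lambda+2)s}{2}$ and $s\ge 1$ this gives
\begin{equation*}
\lambda+2=\frac{2r}{s}\le 2(n+1),\qquad\text{hence}\qquad \lambda\le 2n .
\end{equation*}
The extremal case $\lambda=2n$ occurs exactly for $s=1$, $r=n+1$, $M=\CP^n$, so that $S$ is the standard sphere; the next value $\lambda=2n-2$ is realized by $r=n$, $M=Q_n$, so that $S=\BW(Q_n)$ — in perfect agreement with Theorem~\ref{mainteor3}. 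Consequently no compact regular $\eta$--Einstein Sasakian manifold with Einstein constant exceeding the bound $2n$ admits a Sasakian immersion into any sphere, which is the assertion.

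The main obstacle is the descent step. One must verify that $\varphi$ is genuinely fibre--preserving for the two Boothby--Wang fibrations — equivalently, that $\varphi_\ast\xi_S=\xi_0$ together with regularity of both total spaces produces a well-defined holomorphic isometric immersion $f$ of the \K\ quotients satisfying $f^\ast\omega_{FS}=\omega_M$ — and that the induced class $L=f^\ast H$ is integral and ample on $M$. This requires matching the Reeb periods (so that $\xi_S$--orbits close up over $\xi_0$--orbits and $f$ is well defined) and checking that $f$ is holomorphic from $\varphi_\ast\circ\phi_1=\phi_2\circ\varphi_\ast$. Once these points are secured, the Chern class computation and the index estimate are routine, and the quadric and projective space appearing in Theorem~\ref{mainteor3} are precisely the two top Kobayashi--Ochiai cases, which is a strong consistency check on the whole argument.
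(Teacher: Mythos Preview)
Your argument establishes the inequality in the wrong direction. You deduce from the existence of a Sasakian immersion that $\lambda\le 2n$, and then conclude that Einstein constant \emph{exceeding} $2n$ is obstructed. But re-read the statement: the hypothesis is $\lambda<2n$, and the assertion is that \emph{this} range is obstructed --- equivalently, that an immersion forces $\lambda\ge 2n$. The Kobayashi--Ochiai theorem bounds the Fano index from above by $n+1$; through your identity $\lambda+2=2r/s$ with $s\ge 1$ this can only ever yield an \emph{upper} bound on $\lambda$, never the lower bound the theorem requires. No rearrangement of an index estimate reverses the direction of the inequality.

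There is a second, logically prior gap: from $c_1(M)=\tfrac{\lambda+2}{2}L$ you write ``in particular $M$ is Fano'', but this already presupposes $\lambda>-2$, which is part of what must be shown. The paper treats the two regimes separately and with different tools. For $\lambda\le-2$ it uses your descent step (this is Proposition~\ref{lemmareg}) to obtain a \K\ immersion $M\hookrightarrow\CP^N$, and then invokes Hulin's theorem that a projectively induced compact \K--Einstein metric has strictly positive scalar curvature; since $\mathrm{scal}_M=2n(\lambda+2)\le 0$, this is a contradiction. For $-2<\lambda<2n$ the paper bypasses the Boothby--Wang base altogether and passes to the \K\ cones: the Sasakian immersion extends to a \K\ immersion $\Phi\colon C(S)\hookrightarrow\C^{N+1}\setminus\{0\}$, and one plays the sign constraint on $\Ric_{C(S)}$ coming from the Gauss equation for a complex submanifold of flat space against the explicit expression $\Ric_{C(S)}=-\nu\bigl(g-\eta\otimes\eta\bigr)$ with $\nu=2n-\lambda\in(0,2n+2)$ coming from the $\eta$--Einstein hypothesis.
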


\begin{remark}\rm\label{exlambda}
It is worth pointing out that  when  $\lambda\leq -2$ then, by (\ref{costanti-Einstein}), a  $\mathcal {D}_a$-homothetic deformation gives rise to an $\eta$--Einstein structure on $S$ with Einstein constant $\lambda_a\leq -2$.
Thus, by Theorem \ref{mainteor4}, any  $\mathcal {D}_a$-homothetic deformation of an $\eta$-Einstein Sasakian manifold with $\lambda\leq -2$ cannot admit a Sasakian immersion into any sphere.
On the other hand a suitable  $\mathcal {D}_a$-homothetic deformation of an $\eta$-Einstein Sasakian manifold with $-2<\lambda <2n$  gives rise to an  $\eta$-Einstein Sasakian manifold manifold with $\lambda_a\geq 2n$ (and viceversa).
Notice also that the case $\lambda =2n$ corresponds to the Einstein case, treated in Theorem \ref{mainteor1}.
\end{remark}

\begin{corollary}\label{mainteor4cor}
Let $M$ be either   a K3 surface with the Calabi--Yau \K\ form, or the flat complex torus  or a  compact Riemann surface
with the hyperbolic form\footnote{Let
$\Sigma _g$ be
a compact Riemann surface
of genus $g\geq 2$. One can realize
$\Sigma _{g}$ as the  quotient
$D / \Gamma$
of the  unit disk $D\subset{\C}$
where $\Gamma$  is a Fuchsian subgroup $\Gamma\subset SU (1, 1)$.
The \K\ form
$\omega_{hyp}= \frac{i}{2\pi}\frac{dz\wedge d\bar{z}}{(1-|z|^2)^2}$
is invariant by $\Gamma$
so it defines an integral  \K\ form on $\Sigma _g$,
denoted by the
same symbol $\omega_{hyp}$ and called the {\em hyperbolic form}.} and let $S$ be  a regular Sasakian manifold
corresponding to $M$  through the Boothby--Wang construction. Then $S$ which  cannot be Sasakian immersed  into any sphere.
\end{corollary}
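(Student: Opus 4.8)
The plan is to realise, in each of the three cases, the Boothby--Wang manifold $S$ as a compact regular $\eta$-Einstein Sasakian manifold with Einstein constant $\lambda\leq -2$, and then to invoke Theorem \ref{mainteor4} (see also Remark \ref{exlambda}).

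First I would observe that each of the listed manifolds $M$ is a compact Hodge manifold (the prescribed K\"ahler form being integral) which carries a K\"ahler--Einstein metric $g_M$ whose K\"ahler form is a positive constant multiple of the given integral form $\omega$: for a K3 surface with its Calabi--Yau form and for the flat complex torus this metric is Ricci-flat, so its Einstein constant is $c=0$; for a compact Riemann surface of genus $g\geq 2$ the hyperbolic form is a positive multiple of the area form of the metric of constant negative curvature, so $c<0$. In every case $c\leq 0$, and the complex dimension $n$ of $M$ satisfies $n\geq 1$.

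Next, let $\pi\colon S\to M$ be a regular compact Sasakian manifold corresponding to $M$ through the Boothby--Wang construction, normalised so that $\pi^{\ast}\omega=d\eta$; then $S$ is compact, being an $\mathbb{S}^{1}$-bundle over the compact manifold $M$, and regular by construction. Under $\pi$ the transverse K\"ahler structure of $S$ is identified with a positive rescaling of $(M,g_M)$, and since the K\"ahler--Einstein property is preserved under rescaling the metric by a positive constant, the transverse K\"ahler structure is K\"ahler--Einstein; hence $S$ is $\eta$-Einstein, by the characterisation of the $\eta$-Einstein condition recalled after \eqref{etaeinstein}. To determine the sign of the Einstein constant $\lambda$ of $S$ I would use the standard relation between the transverse Ricci tensor $\Ric^{T}$ and the Ricci tensor of a Sasakian manifold, namely $\Ric=\Ric^{T}-2g$ on the contact distribution $\mathcal{D}=\ker\eta$, together with $\Ric(\xi,\xi)=2n$ and $\Ric(\xi,X)=0$ for $X\in\mathcal{D}$: if $\Ric^{T}=\kappa\,g^{T}$ these give $\Ric=(\kappa-2)\,g+(2n-\kappa+2)\,\eta\otimes\eta$, so that $\lambda=\kappa-2$. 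As the transverse metric $g^{T}$ is a positive multiple of the pull-back of $g_M$, the transverse Einstein constant $\kappa$ has the same sign as $c$; hence $\kappa\leq 0$ and $\lambda=\kappa-2\leq -2<2\leq 2n$.

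Thus $S$ is a compact regular Sasakian $\eta$-Einstein manifold of dimension $2n+1$ with $\lambda<2n$, and Theorem \ref{mainteor4} shows that $S$ cannot be Sasakian immersed into any sphere, which proves the corollary. The only point requiring some care is the identification of the transverse K\"ahler structure of the Boothby--Wang manifold with a positive rescaling of the K\"ahler--Einstein structure of $M$, and the accompanying bookkeeping of normalisation constants; once this is in place, the inequality $\lambda\leq -2$ follows at once from $c_{1}(M)\leq 0$.
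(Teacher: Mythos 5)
Your proposal is correct and follows exactly the route the paper intends: the corollary is stated as an immediate consequence of Theorem \ref{mainteor4}, and your verification that each listed $M$ is K\"ahler--Einstein with non-positive Einstein constant, so that the Boothby--Wang manifold is $\eta$-Einstein with $\lambda=\kappa-2\leq -2<2n$, is precisely the missing bookkeeping. The computation $\Ric=(\kappa-2)g+(2n-\kappa+2)\eta\otimes\eta$ and the observation that positive rescaling of the transverse metric preserves the sign of $\kappa$ are both sound.
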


\begin{theorem}\label{mainteor5}
Let $S$ be a compact homogeneous Sasakian manifold.
Then, after possibly performing a  $\mathcal {D}_a$-homothetic deformation,  $S$ admits a Sasakian immersion into $\mathbb{S}^{2N+1}$
if and only $S$ is regular and  either $S$ is simply--connected or its fundamental group is finite cyclic.
\end{theorem}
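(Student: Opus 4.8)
The plan is to convert, through the Boothby--Wang construction, the Sasakian immersion problem into the corresponding question for Kähler immersions of compact homogeneous Kähler manifolds into $\CP^N$, and then to invoke the (much better developed) Kähler theory.

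For the forward implication I would start from a Sasakian immersion $\varphi\colon S_a\to\mathbb S^{2N+1}$, where $S_a$ is the compact homogeneous Sasakian manifold obtained from $S$ by a $\mathcal D_a$-deformation. Since $\varphi_*\xi_{S_a}=\xi_0$ and $\xi_0$ generates the free Hopf $\mathbb S^1$-action, $\varphi$ intertwines the two Reeb flows; hence the closure of any Reeb orbit $\gamma$ of $S_a$ is sent by $\varphi$ into a single Hopf circle, a $1$-dimensional submanifold, and as an immersion cannot collapse dimension this forces $\dim\overline\gamma\le 1$, so $\gamma=\overline\gamma$ is closed and $S_a$ is quasi-regular. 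Homogeneity makes all Reeb orbits congruent, in particular of the same length, so a reparametrization of the Reeb flow is a free $\mathbb S^1$-action and $S_a$ is regular. Let $\pi\colon S_a\to M$ be the Boothby--Wang fibration over the compact homogeneous Hodge manifold $M$, with transverse Kähler form $\omega_M$. The composition $\pi_0\circ\varphi$, with $\pi_0\colon\mathbb S^{2N+1}\to\CP^N$ the Hopf fibration, is constant along Reeb orbits and descends to $\bar\varphi\colon M\to\CP^N$, which by \eqref{sasakianimmersion1}--\eqref{sasakianimmersion2} and the identities $d\eta_0=\pi_0^*\omega_{FS}$, $d\eta_{S_a}=\pi^*\omega_M$ (up to a positive constant) is a Kähler immersion. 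By the Borel--Remmert theorem $M\cong F\times T$ with $F$ a flag manifold and $T$ a complex torus; since a flat complex torus---hence any Kähler manifold with a torus factor---admits no Kähler immersion into $\CP^N$ (Calabi rigidity, see \cite{LoiZedda-book}), the factor $T$ is trivial and $M=F$ is simply connected. The homotopy exact sequence of $\mathbb S^1\to S_a\to F$ then gives $\pi_1(S_a)\cong\Z/d$, finite cyclic (trivial when $d=1$); and since a $\mathcal D_a$-deformation changes neither the underlying manifold nor the Reeb foliation, the same conclusions hold for $S$.

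For the converse I would assume $S$ regular, compact, homogeneous with $\pi_1(S)$ finite cyclic or trivial, take its Boothby--Wang fibration $\pi\colon S\to M$, and note that $\pi_1(M)$, being a quotient of $\pi_1(S)$, is finite, so by Borel--Remmert $M=F$ is a flag manifold (a torus factor would contribute a free abelian fundamental group). Then I would perform the $\mathcal D_a$-deformation that presents $S_a$ as $\BW(F,\omega)$ for an invariant \emph{integral} Kähler form $\omega$ on $F$ (this is the point of the deformation: it rescales the transverse Kähler class and the Reeb period so that the Boothby--Wang normalization $\pi^*\omega=d\eta_{S_a}$ holds with $\omega$ integral), uniqueness of $\BW(F,\omega)$ being provided by Proposition \ref{unique}. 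Next, by the Borel--Weil theorem the ample $G$-line bundle $L$ with $c_1(L)=[\omega]$ has irreducible space of sections $V=H^0(F,L)$, and the induced $G$-equivariant embedding $\bar f\colon F\hookrightarrow\CP(V^*)=\CP^N$ pulls the Fubini--Study form of a $G$-invariant Hermitian metric on $V$ back to a $G$-invariant closed $(1,1)$-form in the class $[\omega]$; since on the flag manifold $F$ every de Rham class has a unique $G$-invariant representative, this pullback equals $\omega$, so $\bar f$ is a Kähler immersion of $(F,\omega)$ into $\CP^N$. Finally, pulling the Hopf fibration $\mathbb S^{2N+1}\to\CP^N$ back along $\bar f$ produces a principal $\mathbb S^1$-bundle over $F$ carrying the connection $\bar f^*\eta_0$ with $\pi^*\omega=d(\bar f^*\eta_0)$; by uniqueness this Sasakian manifold is $\BW(F,\omega)=S_a$, and the bundle map covering $\bar f$ is the desired Sasakian immersion $S_a\to\mathbb S^{2N+1}$.

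The hard part, I expect, is the regularity step in the forward implication: one must exclude Reeb orbits whose closure is a higher-dimensional torus, and then turn the (homogeneity-forced) common period into an honest free circle action. A second subtle point---and precisely the reason a $\mathcal D_a$-deformation is unavoidable in the statement---is the normalization in the converse needed to realize $S_a$ exactly as $\BW(F,\omega)$ with $\omega$ invariant and integral, so that the Borel--Weil/Hopf construction lands in the \emph{standard} sphere rather than in a homothetic rescaling of it. The remaining inputs (Borel--Remmert, non-immersability of flat tori into $\CP^N$, Borel--Weil) enter only as cited black boxes.
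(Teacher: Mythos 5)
Your proposal follows essentially the same route as the paper: reduce to K\"ahler geometry via the Boothby--Wang fibration, rule out the torus factor of the Borel--Remmert decomposition of $M$ using the non-existence of K\"ahler immersions of flat tori into $\CP^N$, read off $\pi_1(S)$ from the homotopy sequence of the circle bundle, and, for the converse, lift a projective K\"ahler immersion of the (necessarily simply connected) base back to a Sasakian immersion via the pull-back bundle and the uniqueness of $\BW(M)$. Your direct dynamical argument for regularity (Reeb orbits are carried into Hopf circles, so orbit closures are at most one-dimensional, and homogeneity equalizes the periods) replaces the paper's foliation-theoretic Proposition \ref{lemmareg}, and your explicit Borel--Weil construction replaces the citation of \cite{LZDGA}; both substitutions are sound.

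There is, however, one genuine gap in the forward implication. From the exact sequence $\pi_2(F)\rightarrow\pi_1(\mathbb{S}^1)\cong\Z\rightarrow\pi_1(S_a)\rightarrow\pi_1(F)=\{0\}$ you only obtain that $\pi_1(S_a)$ is a cyclic quotient of $\Z$; writing it as ``$\Z/d$, finite cyclic'' silently discards the case $d=0$, i.e. $\pi_1(S_a)\cong\Z$, which is precisely the possibility the theorem must exclude (the statement asserts \emph{finite} cyclic). The paper closes this by invoking Fujitani's theorem that the first Betti number of a compact Sasakian manifold is even, so $\pi_1(S_a)\cong\Z$ is impossible. Alternatively, you could observe that the connecting homomorphism $\pi_2(F)\rightarrow\Z$ is evaluation of the Euler class $[\omega]$ on spherical $2$-cycles; since $F$ is simply connected, Hurewicz identifies $\pi_2(F)$ with $H_2(F;\Z)$, on which the K\"ahler class $[\omega]\neq 0$ pairs non-trivially, so the image is $d\Z$ with $d\geq 1$. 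Either way, this step needs an argument; as written it is an unjustified assertion. The rest of your proof is correct.
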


The proof of  Theorem \ref{mainteor1} follows essentially by considering  the induced  \K\ immersion from the Calabi--Yau \K\ cone  $C(S)$ of the Sasakian manifold  $S$ and the \K\ cone of  $\mathbb{S}^{2N+1}$ namely $\mathbb{C}^{N+1}\setminus \{0\}$ and using a result of  Umehara \cite{um} which forces $C(S)$ to be flat.
The proofs of  Theorem \ref{mainteor3} and  Theorem \ref{mainteor4}, for $\lambda\leq -2$, are obtained by considering the induced \K\ immersions into the complex projective space obtained through the Boothby--Wang construction (see Proposition \ref{lemmareg}) and using  some known results on \K\ immersions  due to  D. Hulin \cite{hulinlambda} and K. Tsukada \cite{ts}, respectively.
The case $-2<\lambda < 2n$ in Theorem \ref{mainteor4}  is  treated by the Gauss--Codazzi equations once one cosiders the induced map between the corresponding \K\ cones.

Finally, Theorem \ref{mainteor5}, is based on  a lifting property (Proposition \ref{liftinglemma}) and on the classification of \K\ immersions
of  compact homogeneous \K\ spaces due to the second author, Di Scala and Hishi  \cite{ishi}.

The paper consists in two more  sections.
In Section \ref{BWS} we prove Proposition \ref{lemmareg}, Proposition \ref{unique} and  Proposition \ref{liftinglemma}, while
Section \ref{proofs} is dedicated to the proofs of the main results, Theorems 1-4.

\medskip 
The authors would like to thank Gianluca Bande e Giovanni Placini for a careful reading of the paper.

\section{Boothby--Wang fibrations and Sasakian immersions}\label{BWS}
Let $(S,\eta)$ be a contact manifold and let $\mathcal F$ be the foliation defined by the Reeb vector field. It is well known that $S$ admits an atlas
\begin{equation*}
  \{\left(U_i, \varphi_{i}: U_{i} \longrightarrow \mathbb{R}^{2n+1}=\mathbb{R}\times\mathbb{R}^{2n}\right)\}_{i\in I}
\end{equation*}
such that the change of charts diffeomorphisms $\varphi_{ij}$ locally take the form
\begin{equation*}
\varphi_{ij}(x,y)=\left(g_{ij}(x,y),h_{ij}(y)\right)
\end{equation*}
Each foliated chart is then divided into plaques, the connected components of
\begin{equation*}
\varphi_{i}^{-1}\left(\mathbb{R}\times\left\{y\right\}\right),
\end{equation*}
where $y\in\mathbb{R}^{2n}$, and the change of chart diffeomorphisms preserve this decomposition. Now, $(S,\eta)$ is said to be \emph{regular} if the foliation $\mathcal F$  is regular in the sense of foliation theory. This means that for any $x\in S$ there exists a foliated chart $U$ containing $x$ such that every leaf of $\mathcal F$ intersects at most one plaque of $U$.

The proof of our results  rely on some lemmas on foliation theory. Recall that
a \emph{foliated map} is a differentiable map $f: (X, \mathcal{F}) \longrightarrow (X', \mathcal{F}')$ between foliated manifolds which preserves the foliation structure, i.e. which maps the leaves of $\mathcal{F}$ into leaves of $\mathcal{F}'$, or equivalently, for all $x\in X$,  $f_{{\ast}_{x}}(L(x)) \subset L'(f(x))$, where $L=T(\mathcal F)$ and $L'=T(\mathcal F')$ are integrable distribution of the foliations.
The proof of the following lemma is straighforward.

\begin{lemma}\label{coordinates}
Let $(X, \mathcal{F}$) and $(X', \mathcal{F}')$ be foliated manifolds of dimension $n$ and $n'$, respectively, and $\varphi:X \longrightarrow X'$ be a foliated immersion. Suppose that $\dim(\mathcal{F})=\dim(\mathcal{F'})=p$. Then for each $x\in X$ there are charts $\psi: U \longrightarrow \mathbb{R}^{p}\times\mathbb{R}^{q}$ for $X$ about $x$ and $\psi': U' \longrightarrow \mathbb{R}^{p}\times\mathbb{R}^{q'}$ for $X'$ about $\varphi(x)$ such that
\begin{itemize}
  \item[(i)] $\psi(x)=(0,\ldots,0)\in\mathbb{R}^{n}$
  \item[(ii)] $\psi'(\varphi(x))=(0,\ldots,0)\in\mathbb{R}^{n'}$
  \item[(iii)] $\hat\varphi (x_{1},\ldots,x_{n})=(x_{1},\ldots,x_{n},0,\ldots,0)$, where $\hat\varphi:=\psi'\circ \varphi \circ \psi^{-1}$
  \item[(iv)] $L(x)=\textrm{span}\left\{\frac{\partial}{\partial x_{1}}(x),\ldots,\frac{\partial}{\partial x_{p}}(x)\right\}$, \  $L'(\varphi(x))=\textrm{span}\left\{\frac{\partial}{\partial x_{1}}(\varphi(x)),\ldots,\frac{\partial}{\partial x_{p}}(\varphi(x))\right\}$
\end{itemize}
where  $q=n-p$, $q'=n'-p$.
\end{lemma}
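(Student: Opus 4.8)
The plan is to combine the existence of \emph{foliated charts} (which is part of the definition of a foliated manifold, as recalled above for $(S,\eta)$) with the classical local normal form of an immersion. First I would pick a foliated chart $\psi_0\colon V\to\mathbb{R}^p\times\mathbb{R}^q$ for $X$ about $x$ with $\psi_0(x)=0$, in which the plaques are the slices $\mathbb{R}^p\times\{b\}$, so that the leaf distribution $L$ is spanned by $\partial/\partial a_1,\dots,\partial/\partial a_p$ (here $a$ and $b$ denote the $\mathbb{R}^p$- and $\mathbb{R}^q$-coordinates), and, likewise, a foliated chart $\psi_0'\colon V'\to\mathbb{R}^p\times\mathbb{R}^{q'}$ for $X'$ about $\varphi(x)$ with $\psi_0'(\varphi(x))=0$. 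In these charts $\varphi$ is represented near $0$ by a smooth immersion $F=\psi_0'\circ\varphi\circ\psi_0^{-1}$ with $F(0)=0$. Writing $F=(F^{(1)},F^{(2)})$ according to the splitting $\mathbb{R}^p\times\mathbb{R}^{q'}$ of the target, the hypothesis $\varphi_{*y}(L(y))\subset L'(\varphi(y))$ for all $y$ says precisely that $\partial F^{(2)}/\partial a_i\equiv 0$ for $i\le p$; hence $F^{(2)}$ depends only on $b$, and $F(a,b)=(G(a,b),H(b))$ for smooth maps $G$ and $H$ with $G(0,0)=0$ and $H(0)=0$.

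Next I would straighten the leaf directions in the source. The differential $DF(0)$ is block triangular with diagonal blocks $\partial_a G(0)$ (of size $p\times p$) and $\partial_b H(0)$, and since $DF(0)$ is injective so is $\partial_a G(0)$ — equivalently, $\varphi_{*x}$ restricts to a linear isomorphism $L(x)\to L'(\varphi(x))$ — hence $\partial_a G(0)$ is invertible. Shrinking $V$ I may assume $\partial_a G$ is invertible throughout, so that $\Phi(a,b):=(G(a,b),b)$ is a diffeomorphism onto a neighborhood of $0$. Since the last $q$ components of $\Phi$ are just $b$, the map $\Phi$ carries slices to slices, so $\psi_1:=\Phi\circ\psi_0$ is again a foliated chart for $X$, and, because $\partial_a G$ is invertible, in $\psi_1$ the distribution $L$ is still spanned by the first $p$ coordinate fields. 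After this change of coordinates $\varphi$ reads $(a,b)\mapsto(a,H(b))$; as this is still an immersion, $H\colon\mathbb{R}^q\to\mathbb{R}^{q'}$ is an immersion at $0$ with $H(0)=0$.

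Finally I would straighten the transverse part in the target: by the local normal form of an immersion there is a diffeomorphism $\kappa$ of a neighborhood of $0$ in $\mathbb{R}^{q'}$ with $\kappa(0)=0$ and $\kappa(H(b))=(b,0)$ for $b$ near $0$. Since $\kappa$ modifies only the transverse factor $\mathbb{R}^{q'}$ and leaves the $\mathbb{R}^p$ factor unchanged, $\psi':=(\id_{\mathbb{R}^p}\times\kappa)\circ\psi_0'$ is still a foliated chart for $X'$, with $\psi'(\varphi(x))=0$ and with $L'$ spanned by the first $p$ coordinate fields. Taking $\psi:=\psi_1$ (restricted to a small enough neighborhood $U$ of $x$) and relabelling the source coordinates as $(x_1,\dots,x_n)$, the composition $\hat\varphi=\psi'\circ\varphi\circ\psi^{-1}$ then sends $(a,b)$ to $(a,\kappa(H(b)))=(a,b,0,\dots,0)$, i.e. $\hat\varphi(x_1,\dots,x_n)=(x_1,\dots,x_n,0,\dots,0)\in\mathbb{R}^{n'}$, which is (iii); properties (i), (ii) and (iv) hold by construction.

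The computation is elementary, and the statement is in essence a foliated refinement of the immersion normal form; the only point that requires attention — and presumably the reason the lemma is recorded at all — is that the two coordinate changes must be chosen so as to keep both the source and the target charts \emph{foliated}, with the leaf distributions $L$ and $L'$ in the standard position given by the first $p$ coordinate fields, while simultaneously normalizing $\varphi$. Once the foliated hypothesis has been used to bring $F$ into the partially decoupled form $F(a,b)=(G(a,b),H(b))$, the straightening of $G$ in the source (which acts only on the first $p$ coordinates, fibrewise over $b$) and of $H$ in the target (which acts only on the transverse coordinates) automatically respect the foliated structure, and this is exactly what makes the argument go through.
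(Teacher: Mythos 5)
Your proof is correct. The paper in fact gives no argument at all for this lemma (it is declared ``straightforward'' and left to the reader), and what you have written is precisely the standard proof that is being left implicit: foliated charts give the decoupled form $F(a,b)=(G(a,b),H(b))$, and the two straightenings --- of $G$ fibrewise in the source and of $H$ transversally in the target --- are exactly the ones that keep both charts foliated while normalizing $\varphi$, the one point you rightly identify as needing care.
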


\begin{lemma}\label{lemmareg0}
Let $\mathcal{F}$ and $\mathcal{F}'$ be two (Riemannian) foliations of the same dimension on the  (Riemannian) compact\footnote{It is well known (\cite{palais}) that the quotient space $X/{\mathcal F}$ of a regular foliation on a compact manifold  $X$ has a unique smooth structure which makes the natural  projection $X\rightarrow X/{\mathcal F}$ a submersion.}
manifolds $X$ and $X'$, respectively. If there exists a foliated (isometric) immersion $\varphi: (X,\mathcal{F}) \longrightarrow (X',\mathcal{F}')$ and $\mathcal{F}'$ is regular, then also $\mathcal{F}$ is regular. Furthermore, $\varphi$ descends to an (isometric) immersion $i(\varphi): X/{\mathcal F} \longrightarrow X'/{\mathcal F}'$ between the quotient spaces.
\end{lemma}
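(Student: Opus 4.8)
The plan is to prove Lemma~\ref{lemmareg0} in two stages: first the regularity statement, then the descent of $\varphi$ to the quotients. Throughout, fix the foliated isometric immersion $\varphi\colon (X,\mathcal F)\to(X',\mathcal F')$ with $\dim\mathcal F=\dim\mathcal F'=p$, $X$ and $X'$ compact, and $\mathcal F'$ regular.

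\textit{Step 1: regularity of $\mathcal F$.} I would work locally around an arbitrary $x\in X$ using the simultaneous foliated charts of Lemma~\ref{coordinates}: charts $\psi\colon U\to\mathbb R^p\times\mathbb R^q$ about $x$ and $\psi'\colon U'\to\mathbb R^p\times\mathbb R^{q'}$ about $\varphi(x)$ in which $\varphi$ reads $(x_1,\dots,x_n)\mapsto(x_1,\dots,x_n,0,\dots,0)$ and the leaves are the slices $\{x_{p+1}=\text{const},\dots\}$. Shrinking $U'$ if necessary, we may assume (by regularity of $\mathcal F'$) that every leaf of $\mathcal F'$ meets at most one plaque of $U'$. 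I claim the same holds for $\mathcal F$ in $U$ (possibly shrunk). Indeed, suppose two plaques $P_1,P_2$ of $U$ lie on the same leaf $L$ of $\mathcal F$; since $\varphi$ is foliated, $\varphi(L)$ is contained in a single leaf $L'$ of $\mathcal F'$, and in the chart coordinates $\varphi(P_i)$ is exactly the plaque of $U'$ through $\varphi(P_i)$ (because $\hat\varphi$ maps the $i$-th slice of $U$ into the $i$-th slice of $U'$). Hence $L'$ meets two plaques of $U'$ — unless $\varphi(P_1)$ and $\varphi(P_2)$ lie in the same plaque of $U'$, which by injectivity of $\varphi$ on the $p$-dimensional slices forces $P_1=P_2$. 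This gives a foliated chart witnessing regularity at $x$, and $x$ was arbitrary, so $\mathcal F$ is regular. (One subtlety: "regular foliation" as defined in the excerpt is a purely local plaque condition, so no global argument about closedness of leaves or properness of the equivalence relation is actually needed here; if a stronger global notion were intended, compactness of $X$ together with the standard fact in the footnote — that $X/\mathcal F$ then carries a smooth manifold structure with $X\to X/\mathcal F$ a submersion — would be invoked.)

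\textit{Step 2: the descent map.} Since $\mathcal F$ and $\mathcal F'$ are both regular on compact manifolds, by the cited result of Palais the quotients $X/\mathcal F$ and $X'/\mathcal F'$ are smooth manifolds and the projections $\pi\colon X\to X/\mathcal F$, $\pi'\colon X'\to X'/\mathcal F'$ are submersions. Because $\varphi$ is foliated, it carries leaves to leaves, so $\pi'\circ\varphi$ is constant on each leaf of $\mathcal F$ and therefore factors as $\pi'\circ\varphi = i(\varphi)\circ\pi$ for a unique set-theoretic map $i(\varphi)\colon X/\mathcal F\to X'/\mathcal F'$; smoothness of $i(\varphi)$ follows from the submersion property of $\pi$ (locally $i(\varphi)$ is $\pi'\circ\varphi$ composed with a local section of $\pi$). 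That $i(\varphi)$ is an immersion I would check at the infinitesimal level: in the Lemma~\ref{coordinates} coordinates, $d\pi_x$ and $d\pi'_{\varphi(x)}$ are the projections killing the first $p$ coordinates, $d\varphi_x$ is the inclusion $(v_1,\dots,v_n)\mapsto(v_1,\dots,v_n,0,\dots,0)$, and a diagram chase shows $d(i(\varphi))_{\pi(x)}$ is the inclusion $\mathbb R^q\hookrightarrow\mathbb R^{q'}$ on the first $q$ coordinates — in particular injective. Finally, for the isometric/Riemannian case: a Riemannian foliation has a well-defined transverse metric, which on the quotient is exactly the metric making $\pi$ a Riemannian submersion on the orthogonal complement of the leaves; since $\varphi$ is isometric and foliated, it respects the orthogonal splittings $TX = L\oplus L^\perp$ and $TX' = L'\oplus (L')^\perp$ (the normal bundle of $\varphi$ being contained in $(L')^\perp$, as $\varphi_\ast L = L'$), so $d(i(\varphi))$ is an isometry onto its image, i.e. $i(\varphi)$ is an isometric immersion.

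\textit{Expected main obstacle.} The genuinely delicate point is Step~1: one must be careful that the plaque-matching argument is watertight, in particular that the injectivity of $\varphi$ restricted to a single plaque (guaranteed because $\hat\varphi$ is the linear inclusion on the slice, hence injective there) really does rule out two distinct plaques of $U$ mapping into one plaque of $U'$, and that the chart $U$ can be chosen small enough that the "at most one plaque" property is inherited cleanly rather than merely "finitely many plaques." A secondary technical point is verifying that $i(\varphi)$ is well-defined globally — i.e. that distinct leaves of $\mathcal F$ are not forced together — which again reduces to the fact that $\varphi$ maps \emph{different} leaves to \emph{different} leaves; this uses injectivity of $\varphi$ but should be recorded explicitly. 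The descent and the infinitesimal injectivity/isometry computations in Step~2 are then routine diagram chases in the adapted coordinates.
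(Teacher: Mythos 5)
Your proposal is correct and follows essentially the same route as the paper's proof: the adapted charts of Lemma \ref{coordinates} together with the plaque-matching argument give regularity (the paper phrases this as a contradiction, you as a contrapositive, but the content is identical), and the descent to the quotients is handled via the leaf-space submersions and the horizontal/vertical splitting, exactly as in the paper. The only cosmetic difference is that you verify the immersion and isometry properties by a coordinate diagram chase where the paper uses the abstract identity $i(\varphi)_{\ast}\circ\pi_{\ast}=\pi'_{\ast}\circ\varphi_{\ast}$; these are equivalent.
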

\begin{proof}
Assume that $\mathcal{F}$ is not regular. Then there exists a point $x \in X$ and a leaf $L$ of $\mathcal{F}$ such that, for any foliated chart $U$ containing $x$, $L$ intersects more then one plaque in $U$. Let us consider the foliated charts $U$ and $U'$ about $x$ and $\varphi(x)$, respectively, satisfying the properties stated in Lemma \ref{coordinates}. Then there exist at least two plaques, say $P_1=\psi^{-1}\left(\mathbb{R}^{p}\times\left\{\textbf{y}_1\right\}\right)$ and $P_2= \psi^{-1}\left(\mathbb{R}^{p}\times\left\{\textbf{y}_2\right\}\right)$, such that
\begin{equation}\label{intersezione}
L\cap P_1 \neq \nothing, \quad L\cap P_2 \neq \nothing,
\end{equation}
where $\textbf{y}_1,\textbf{y}_2\in\mathbb{R}^{q}$. Note that, for each $i=1,2$, $\varphi(P_i)$ is a plaque of $\mathcal{F}'$ in $U':=\varphi(U)$. Indeed, using Lemma \ref{coordinates}, we have that $\varphi(P_i)=\varphi(\psi^{-1}(\mathbb{R}^{p}\times \left\{\textbf{y}_i\right\}))=\psi'^{-1}(\hat{\varphi}(\mathbb{R}^{p}\times\{\textbf{y}_i\}))=\psi'^{-1}(\mathbb{R}^{p}\times \{(\textbf{y}_i,0,\ldots,0)\})$.  Now, since $\varphi$ is a foliated map,  $L'=\varphi(L)$ is a leaf of $\mathcal{F}'$ and, because of the injectivity of $\varphi$, from \eqref{intersezione} it follows that  $L'\cap \varphi(P_i) \neq \nothing$ for each $i \in \left\{1,2\right\}$. But this contradicts the regularity of $\mathcal{F'}$. For the last part of the statement, note that the construction of $i(\varphi)$, which follows from the leaf-preserving property of $\varphi$, is a well-known fact in foliation theory (see for instance  \cite{KonderakWolak} and \cite{wolak}). It remains to prove that $i(\varphi)$ is an immersion. First, $\varphi$ maps injectively leaves of $\mathcal{F}$ in  leaves of $\mathcal F'$, so that $i(\varphi)$ is injective.   Next, let $M$ and $M'$ denote the leaf spaces of $\mathcal F$ and $\mathcal F'$, respectively, and $\pi : X \longrightarrow M$, $\pi' : X' \longrightarrow M'$ the corresponding global submersions defining the foliations. Fix  Riemannian metrics $g$ on $X$ and  $g'$ on $X'$. Then the tangent spaces at each point of $X$ and $X'$ splits as $T_{x}X=H_{x}\oplus V_{x}$, $T_{x'}X'=H'_{x'}\oplus V'_{x'}$, with $V_{x}=\ker(\pi_{\ast_{x}})=T_{x}\mathcal F$, $V'_{x'}=\ker(\pi'_{\ast_{x'}})=T_{x'}\mathcal F'$, and $H_{x}$, $H'_{x'}$ the corresponding orthogonal complements. Then $\pi_{\ast_{x}}$ and $\pi'_{\ast_{x'}}$ realize isomorphisms respectively between $H_{x}$ and $T_{\pi(x)}M$ and between $H'_{x'}$ and $T'_{\pi'(x')}M'$. Since $i(\varphi)_{\ast}\circ\pi_{\ast}=\pi'_{\ast}\circ \varphi_{\ast}$ and $\varphi$ is an immersion, we conclude that $i(\varphi)_{\ast}$ is injective at any point of $M$. Finally, if  $\mathcal{F}$ and $\mathcal F'$ are Riemannian foliations, $\pi$ and $\pi'$ are Riemannian submersions, i.e. for any $x \in X$ and $x' \in X'$ the maps $\pi_{\ast_{x}}:H_{x}\longrightarrow T_{\pi(x)}M$ and $\pi'_{\ast_{x'}}:H'_{x'}\longrightarrow T_{\pi'(x')}M'$ are isometries. It follows that if  $\varphi$ is an isometric immersion, also $i(\varphi)$ is isometric.
\end{proof}

A special case of Lemma \ref{lemmareg0} is the following result which will be one key ingredient in the proof of our main results.

\begin{proposition}\label{lemmareg}
Let $\varphi: S\rightarrow  S'$ be a Sasakian immersion between two Sasakian manifolds $S$ and $S'$.
Assume  that $S$ and $S'$ are compact and $S'$ is regular.
Then $S$ is regular and there exists a \K\ immersion $i(\varphi):M\rightarrow  M'$ such that $i(\varphi)\circ\pi=\pi'\circ\varphi$,
where $\pi :S\rightarrow M$ and $\pi' :S'\rightarrow M'$  are the Riemannian submersions given by  Boothby-Wang construction.
\end{proposition}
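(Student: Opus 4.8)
The plan is to derive the statement from Lemma \ref{lemmareg0}, applied to the one-dimensional Reeb foliations of $S$ and $S'$, and then to upgrade the resulting isometric immersion of leaf spaces to a \K\ immersion by checking that it is holomorphic.

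First I would observe that a Sasakian immersion $\varphi: S \to S'$ is automatically a foliated isometric immersion between $(S, \mathcal F)$ and $(S', \mathcal F')$, where $\mathcal F$ and $\mathcal F'$ denote the foliations generated by the Reeb fields $\xi_1$ and $\xi_2$: it is isometric by \eqref{sasakianimmersion1}, it maps each leaf of $\mathcal F$ into a leaf of $\mathcal F'$ because $\varphi_* \xi_1 = \xi_2$, and $\dim \mathcal F = \dim \mathcal F' = 1$. Next, $\mathcal F$ and $\mathcal F'$ are Riemannian foliations: on a Sasakian manifold the Reeb field is Killing of constant length and $\mathcal L_\xi \eta = 0$, so the Reeb flow acts by isometries preserving the $g$-orthogonal splitting $TS = \R \xi \oplus \mathcal D$, with $\mathcal D = \ker \eta = \xi^{\perp}$ (recall $g(\xi, \cdot) = \eta$), and hence the Sasakian metric is bundle-like with respect to $\mathcal F$. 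Since $S$ and $S'$ are compact and $\mathcal F'$ is regular ($S'$ being regular), Lemma \ref{lemmareg0} applies and yields: $\mathcal F$ is regular, i.e. $S$ is a regular Sasakian manifold; and $\varphi$ descends to an isometric immersion $i(\varphi): S/\mathcal F \to S'/\mathcal F'$ of leaf spaces satisfying $i(\varphi) \circ \pi = \pi' \circ \varphi$.

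It then remains to identify the leaf spaces $S/\mathcal F$ and $S'/\mathcal F'$ with the Hodge manifolds $M$ and $M'$ of the Boothby--Wang construction, and to check that $i(\varphi)$ is a \K\ immersion. For the identification: when $S$ is regular and compact, the leaf space $S/\mathcal F$ endowed with the metric making $\pi$ a Riemannian submersion is precisely the base of the Boothby--Wang fibration, because the vertical distribution is $\R \xi = T\mathcal F$, its orthogonal complement is the contact distribution $\mathcal D$, and the transverse \K\ data on $M$ are obtained by pushing down $g_1|_{\mathcal D}$, $\phi_1|_{\mathcal D}$ and $d\eta_1$ (these are basic, since $\mathcal L_{\xi_1} g_1 = \mathcal L_{\xi_1}\phi_1 = \mathcal L_{\xi_1} d\eta_1 = 0$ and $i_{\xi_1} d\eta_1 = 0$); the same applies to $S'$. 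To see that $i(\varphi)$ is holomorphic, note that $\varphi$ maps $\mathcal D_1$ into $\mathcal D_2$: if $(\eta_1)_x(X) = 0$ then $(\eta_2)_{\varphi(x)}(\varphi_* X) = (\varphi^* \eta_2)_x(X) = 0$; combined with $\varphi_* \xi_1 = \xi_2$ this shows $\varphi$ carries the horizontal distribution $\mathcal D_1$ into the horizontal distribution $\mathcal D_2$. Since $\phi_1|_{\mathcal D_1}$ and $\phi_2|_{\mathcal D_2}$ descend to the complex structures $J$ on $M$ and $J'$ on $M'$, and since $\varphi_* \circ \phi_1 = \phi_2 \circ \varphi_*$ by \eqref{sasakianimmersion2} while $i(\varphi)_* \circ \pi_* = \pi'_* \circ \varphi_*$, we conclude $i(\varphi)_* \circ J = J' \circ i(\varphi)_*$ on $TM$. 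Hence $i(\varphi)$ is a holomorphic isometric immersion between \K\ manifolds, i.e. a \K\ immersion, which is what we want.

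The step I expect to require the most care is not conceptual but a matter of bookkeeping: matching the Riemannian-submersion metric produced abstractly by Lemma \ref{lemmareg0} with the transverse \K\ metric fixed by the normalization $\pi^* \omega = d\eta$ in the Boothby--Wang construction --- keeping track of the customary factors of $2$ coming from $d\eta(X,Y) = 2 g(X, \phi Y)$ --- and verifying that $\phi|_{\mathcal D}$ really induces a well-defined, integrable transverse complex structure on the leaf space. Both are standard facts in Sasakian geometry but should be invoked explicitly; everything else follows directly from Lemma \ref{lemmareg0} and the defining identities \eqref{sasakianimmersion1}--\eqref{sasakianimmersion2}.
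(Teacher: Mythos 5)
Your proposal is correct and follows essentially the same route as the paper's own proof: apply Lemma \ref{lemmareg0} to the Reeb foliations (which are Riemannian since the Reeb fields are Killing, and for which $\varphi$ is foliated because $\varphi_*\xi_1=\xi_2$), then verify holomorphy of $i(\varphi)$ by combining $J\circ\pi_*=\pi_*\circ\phi$ with $\varphi_*\circ\phi_1=\phi_2\circ\varphi_*$ and $i(\varphi)_*\circ\pi_*=\pi'_*\circ\varphi_*$. The extra bookkeeping you flag at the end is handled implicitly in the paper by citing the transversely \K\ property of the Reeb foliation.
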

\begin{proof}
Let $\mathcal F$ and ${\mathcal F}'$ denote the $1$-dimensional foliations defined by the Reeb vector fields $\xi$ and $\xi'$  of $S$ and $S'$, respectively. Note that $\mathcal F$ and ${\mathcal F}'$ are Riemannian foliations, since $\xi$ and $\xi'$ are Killing. Being $\varphi$  a Sasakian immersion,  $\varphi_{\ast}\xi=\xi'$ holds. In particular this implies that $\varphi$ is a foliated isometric immersion between the foliated manifolds $(S,\mathcal F)$ and $(S',{\mathcal F}')$. Then, by Lemma \ref{lemmareg0}, $\varphi$ induces an isometric immersion $i(\varphi):M\rightarrow  M'$ such that $i(\varphi)\circ\pi=\pi'\circ\varphi$. It remainss to prove that $i(\varphi)$ is a K\"{a}hler immersion. Notice that, since $\mathcal F$ and $\mathcal F'$ are  transversely K\"{a}hler foliations, the complex structures $J$ and $J'$ of $M$ and $M'$ and the tensors $\phi$ and $\phi'$ of the Sasakian structures of $S$ and $S'$, respectively, satisfy
\begin{equation}\label{commutativity}
J\circ \pi_{\ast} = \pi_{\ast}\circ \phi, \quad J'\circ \pi'_{\ast} = \pi'_{\ast}\circ \phi'.
\end{equation}
Now let $X$ be a tangent vector at a point of $M$. There exists a unique horizontal vector $\bar{X}$ at point of $S$ such that $\pi_{\ast}\bar{X}=X$. Then,  by using \eqref{commutativity} and \eqref{sasakianimmersion2}, we have   $J'(i(\varphi)_{\ast}X)$ \ $=J'(i(\varphi)_{\ast}(\pi_{\ast}\bar{X}))=$ \ $J'(\pi'_{\ast}(\varphi_{\ast}\bar{X}))=$ \ $\pi'_{\ast}(\phi'_{\ast}(\varphi_{\ast}\bar{X}))=$  \ $\pi'_{\ast}(\varphi_{\ast}(\phi \bar{X}))=$ \ $i(\varphi)_{\ast}(\pi_{\ast}(\phi \bar{X}))=i(\varphi)_{\ast}(J(\pi_{\ast}\bar{X}))=i(\varphi)_{\ast}(J X)$. This completes the proof.
\end{proof}

\begin{remark}\rm
The regularity of the Sasakian manifold $S$ in Proposition \ref{lemmareg} was already stated in \cite{harada73-II}. Here we have given a more general and detailed proof.
\end{remark}

\begin{remark}\rm
Since  the exotic Sasaki--Einstein structures on $\mathbb{S}^{2n+1}$ (\cite{bg})
are not regular, Proposition \ref{lemmareg} yields an alternative proof of Corollary \ref{cormainteor1}.
\end{remark}

In the proof of our result we also need to see if we can reverse the construction in Proposition \ref{lemmareg} by lifting a \K\ immersion to a Sasakian one (see Proposition \ref{liftinglemma} below).
So assume that  $M$ is a compact Hodge manifold.
As we have already pointed out in the introduction there exists a compact  regular Sasakian manifold $(S, \eta, g)$ which  has  over it a $\mathbb{S}^1$-bundle $\pi: S\rightarrow M$
with connection $\eta$ such that $d\eta=\pi^*\omega$.
On the other hand,  the integrality of $\omega$ implies the existence of a  holomorphic (ample) line bundle $p: L\rightarrow M$
whose first Chern class is represented by the De Rham cohomology class of the integral \K\  form $\omega$, namely $c_1(L)=[\omega]_{DR}$. Now,  by a  result of  Ornea and  Verbitsky \cite{OrneaVerbCR} there exists  a Hermitian metric $h_*$ on the dual line  bundle $p_*:L^*\rightarrow M$ such that the  bundle $\pi :S\rightarrow M$ is the restriction
of  $p_*:L^*\rightarrow M$ to the subbundle  consisting of unitary vectors of $L^*$, i.e. $S=\{v\in L^* \ | \ h_*(v, v)=1\}$.
The key point point of their proof is that  the cone $C(S)$ of $S$, viewed as a complex manifold, is equal to the set of non-zero vectors of $L^*$.
Hence the $CR$-structure on $(S, \eta, g)$ and  its   $(1, 1)$-tensor $\phi$ is uniquely detemined by the complex structure of $L$.
Actually one can prove (see, e.g. \cite[Section 2]{ze}) that
 $h_*$ is the dual of the Hermitian metric $h$ on $L$ satisfying
$\Ric(h) = \omega$, where  $\Ric(h)$ is the $2$-form on $M$ whose local expression is given by
\[
\Ric(h)= -\frac{i}{2\pi}\partial\bar{\partial}\log h(\sigma(x),\sigma(x))= \omega
\]
for a trivializing holomorphic section $\sigma : U \rightarrow L\setminus \{0\}$ (here $\partial$ and $\bar\partial$ are the standard complex operator associated  to the holomorphic structure of $L$).
Moreover, the contact form $\eta$ can be written in terms of $h_*$ as follows (see, e.g. the first line  of formula  (8) p. 322 in \cite{ze}):
\begin{equation}\label{heta}
\eta =-i\partial h_{*_|S}
\end{equation}

Using these facts we can prove  the following uniqueness result.
\begin{proposition}\label{unique}
Let $M$ be a simply-connected compact Hodge manifold with integral \K\ form $\omega$.
Let $(S_j, \eta_j , g_j)$, $j=1, 2$,  be two Sasakian manifolds
and assume there exist two principal $\mathbb{S}^1$-bundles $\pi_j:  S_i\rightarrow M$
with connection $\eta_j$  such that $d\eta_j=\pi_j^*\omega$, $j=1, 2$.
Then $(S_1, \eta_1 , g_1)$ and $(S_2, \eta_2, g_2)$ are Sasakian equivalent.
\end{proposition}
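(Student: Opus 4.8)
The plan is to produce the Sasakian equivalence in three moves: first identify $\pi_1$ and $\pi_2$ as principal $\mathbb{S}^1$-bundles, then adjust the identification by a gauge transformation so that it carries $\eta_2$ to $\eta_1$, and finally observe that such a map automatically matches the metrics and the remaining tensors. Simple connectivity of $M$ enters exactly twice, through $H^1(M;\mathbb{R})=0$ and through the torsion-freeness of $H^2(M;\mathbb{Z})$, and I expect these two cohomological facts to be the only genuine inputs.

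\textbf{The bundles coincide.} Since $\eta_j$ is a connection form on $\pi_j$ with $d\eta_j=\pi_j^*\omega$, Chern--Weil theory gives that the real Euler class of $\pi_j$ is a fixed nonzero multiple (independent of $j$) of $[\omega]\in H^2(M;\mathbb{R})$; hence $e(\pi_1)$ and $e(\pi_2)$ have the same image in $H^2(M;\mathbb{R})$. Because $M$ is simply connected, $H_1(M;\mathbb{Z})=0$, so by the universal coefficient theorem $H^2(M;\mathbb{Z})$ is torsion-free and injects into $H^2(M;\mathbb{R})$; therefore $e(\pi_1)=e(\pi_2)$ already in $H^2(M;\mathbb{Z})$. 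As principal $\mathbb{S}^1$-bundles over $M$ are classified up to isomorphism by the Euler class, there is an $\mathbb{S}^1$-equivariant diffeomorphism $F\colon S_1\to S_2$ with $\pi_2\circ F=\pi_1$.

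\textbf{Matching $\eta$ by a gauge change.} The form $F^*\eta_2$ is again a connection form on $\pi_1$ (it is equivariant and takes the value $1$ on the fundamental vector field, since $F$ is equivariant), with curvature $d(F^*\eta_2)=F^*\pi_2^*\omega=\pi_1^*\omega=d\eta_1$. Two connection forms on the same bundle with equal curvature differ by the pullback of a closed $1$-form from the base, so $F^*\eta_2-\eta_1=\pi_1^*\beta$ with $d\beta=0$; since $H^1(M;\mathbb{R})=0$ we may write $\beta=df$ with $f\in C^\infty(M)$. Replacing $F$ by its composition with the gauge transformation $v\mapsto e^{-\sqrt{-1}\,f(\pi_1(v))}\cdot v$ of $\pi_1$ (which subtracts $\pi_1^*df$ from $F^*\eta_2$) we obtain a principal $\mathbb{S}^1$-bundle isomorphism $\widetilde F\colon S_1\to S_2$ over $\mathrm{id}_M$ with $\widetilde F^*\eta_2=\eta_1$.

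\textbf{$\widetilde F$ is a Sasakian equivalence, and the main obstacle.} Being equivariant, $\widetilde F$ carries the fundamental vector field of the $\mathbb{S}^1$-action on $S_1$ to that on $S_2$; these are the Reeb fields, since $i_{\xi_j}\eta_j=1$ and $i_{\xi_j}d\eta_j=i_{\xi_j}\pi_j^*\omega=0$, so $\widetilde F_*\xi_1=\xi_2$. From $\widetilde F^*\eta_2=\eta_1$ we get $\widetilde F(\mathcal D_1)=\mathcal D_2$ for the contact distributions $\mathcal D_j=\ker\eta_j$; and since, in the Boothby--Wang picture, $\pi_j$ is a Riemannian submersion onto $(M,g_M)$ with $\phi_j$ restricted to $\mathcal D_j$ the $\pi_j$-horizontal lift of the complex structure $J$ of $M$ (cf. \eqref{commutativity}), while $\pi_2\circ\widetilde F=\pi_1$, a one-line diagram chase gives $\widetilde F_*\circ\phi_1=\phi_2\circ\widetilde F_*$ on $\mathcal D_1$. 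On $\mathcal D_j$ one has $\phi_j^2=-\mathrm{id}$, hence $g_j(X,Y)=-\tfrac{1}{2}d\eta_j(X,\phi_j Y)$ by $d\eta_j(X,Y)=2g_j(X,\phi_j Y)$; feeding in $\widetilde F^*d\eta_2=d\eta_1$ and the intertwining of the $\phi_j$ gives $\widetilde F^*g_2=g_1$ on $\mathcal D_1$, while on the $\xi$- and mixed directions one uses $g_j(\xi_j,\cdot)=\eta_j$ together with $\widetilde F_*\xi_1=\xi_2$ and $\widetilde F(\mathcal D_1)=\mathcal D_2$. Thus $\widetilde F^*\eta_2=\eta_1$ and $\widetilde F^*g_2=g_1$, so $\widetilde F$ is a Sasakian equivalence by \eqref{equivalence1}. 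I expect the only real obstacle to be the start of the argument: ruling out that $\pi_1,\pi_2$ differ by a torsion class in $H^2(M;\mathbb{Z})$ and that $\eta_1,\eta_2$ differ by a flat connection that is not gauge-trivial — exactly the two points where $\pi_1(M)=0$ is needed — after which everything reduces to a formal manipulation of connection forms and the contact-metric identities.
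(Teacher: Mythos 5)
Your proof is correct, but it follows a genuinely different route from the paper's. The paper works on the holomorphic side: it realizes each $S_j$ (via Ornea--Verbitsky) as the unit circle bundle of the dual $L_j^*$ of a holomorphic line bundle with $c_1(L_j)=[\omega]$, uses simple connectivity to identify $L_1$ and $L_2$ holomorphically, uses compactness of $M$ to show that the two Hermitian metrics solving $\Ric(h_j)=\omega$ agree up to a positive constant, and then reads off $F^*\eta_2=\eta_1$ from the formula $\eta=-i\partial h_{*}|_S$; the $\phi$-intertwining comes for free because the CR structure of $S_j$ is the one induced by the complex structure of $L_j^*$. You instead stay entirely on the real, gauge-theoretic side: classification of principal $\mathbb{S}^1$-bundles by the integral Euler class (which is determined by its real image because $\pi_1(M)=0$ makes $H^2(M;\Z)$ torsion-free), a gauge transformation killing the closed difference $1$-form of the two connections (exact because $H^1(M;\R)=0$), and then a reconstruction of $\phi_j$ and $g_j$ from $\eta_j$ and the base K\"ahler structure via $\phi_j|_{\ker\eta_j}=$ horizontal lift of $J$ and $g_j=-\tfrac12 d\eta_j(\cdot,\phi_j\cdot)+\eta_j\otimes\eta_j$. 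Both arguments consume simple connectivity in exactly the same two places (torsion-freeness of $H^2(M;\Z)$, and vanishing of $H^1(M;\R)$ --- the latter appearing in the paper as triviality of $\mathrm{Pic}^0(M)$ together with uniqueness of the Hermitian metric up to scale). Your version is more elementary --- no line bundles, no Ornea--Verbitsky --- at the price of invoking explicitly that $\phi_j$ projects to the \emph{given} complex structure $J$ of $M$; this is not literally among the stated hypotheses (which tie $S_j$ to $M$ only through $\omega$), but the paper's proof relies on the very same tacit normalization when it identifies the CR structure of $S_j$ with the one inherited from $L_j^*$, so this is a feature of the statement rather than a gap in either argument.
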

\begin{proof}
Let $p_{j*}:L_j\rightarrow M$, $j=1, 2$,  be two holomorphic line bundles such that $c_1(L_j)=[\omega]_{DR}$. Since $M$ is simply-connected  these line bundles are holomorphically equivalent and so  there exists a holomorphic diffeomorphism
$\hat F: L_1^*\rightarrow L_2^*$ such that $p_2\circ\hat  F=p_1$.  Let $h_{j*}$, $j=1, 2$, be the Hermitian metric on $L_j^*$
such that $S_j=\{v\in L_j^* \ | \ h_{j*}(v, v)=1\}$. Since the dual Hermitian metric $h_j$ on $L_j$, $j=1, 2$, satisfies
$\Ric (h_j)=\omega$ and $M$ is compact one easily gets  $\hat F^*(h_{2*})=\lambda h_{1*}$ for a positive  constant $\lambda$.
By denoting by  $F$ the
restriction of $\hat F$ to  $S_1$ one  then  gets a diffeomorphism $F: S_1\rightarrow S_2$ such that $\pi_2\circ F=\pi_1$
and, by the above mentioned result of Ornea-Verbitsky,  it preserves the  tensors $\phi_j$ of $S_j$, namely
\begin{equation*}
F_{{\ast}_x}\circ \phi_{1} = \phi_{2}\circ F_{{\ast}_x},
\end{equation*}
for all $x\in S_1$.
Moreover,  by  (\ref{heta}), one gets
\begin{equation*}
F^* \eta_2=F^* ({-i\partial h_2*}_{|S})={-i\partial\hat  F^*h_{2*}}_{|S}={-i\partial\lambda h_{1*}}_{|S}={-i\partial h_{1*}}_{|S}=\eta_1,
\end{equation*}
where we are denoting by the same symbol the $\partial$-operator of $L_{j*}$, $j=1, 2$.
The last two equations imply $F^*g_2=g_1$ and we are done.
\end{proof}

When  $M$ is a simply-connected  compact Hodge manifold we will denote by $\BW (M)$ the  Sasakian manifold, which we call the Boothby--Wang manifold (unique up to Sasakian transformations by the previous Proposition \ref{unique}) such that there exists a principal $\mathbb{S}^1$-bundle $\pi:\BW (M)\rightarrow M$ whose connection form $\eta$ satisfies $\pi^*\omega=d\eta$.

\begin{example}\label{esfond}
When $M=\C P^n$ is the $n$-dimensional complex projective space and $\omega=\omega_{FS}$ is the Fubini-Study \K\ form\footnote{In homogeneous coordinates the Fubini-Study form reads as  $\omega_{FS}=\frac{i}{2\pi}\partial\bar\partial\log (|Z_0|^2+\cdots+ |Z_n|^2)$.}, then $\BW(\C P^n)=\mathbb{S}^{2n+1}$  and  the  Boothby--Wang fibration $\mathbb{S}^{2n+1}\rightarrow \C P^n$ is the Hopf fibration. Notice that in this case the line bundle $L^*$ is the tautological line bundle over $\CP^n$.
\end{example}

\begin{remark}\rm
When $(M, \omega)$ is a  compact but  non simply-connected \K\ manifold  one could  find  an infinite family of non equivalent  regular Sasakian manifolds $(S, \eta)\rightarrow M$ which are the total space of a $\mathbb{S}^{1}$-bundle over $M$ and satisfying $\pi^*\omega=d\eta$.
This happens, for example by taking $M=\Sigma_g$ a compact  Riemann surface of genus $g\geq 2$ with the hyperbolic form $\omega_{hyp}$. Indeed, there exixts an infinite family of non equivalent holomorphic line bundles over $M$ whose first Chern class can be represented by $\omega_{hyp}$ (see, e.g. \cite{gh}) and thus  by Ornea--Verbitsky one gets an infinite family of non-equivalent  regular Sasakian manifolds $S$ which correspond to $M$ through the Boothby-Wang construction. Notice that, by Corollary \ref{mainteor4cor},  none of these Sasakian manifolds can be Sasakian immersed into some  sphere.
 \end{remark}

The following  lifting result is the key ingredient in the proof of Theoreem \ref{mainteor5}.

\begin{proposition}\label{liftinglemma}
Let $M$, $M'$ be simply-connected  compact Hodge manifolds and let  $(\BW(M), \eta, g )$
(resp. $(\BW(M'), \eta', g' )$ be the corresponding Boothby--Wang manifolds.
Given   a \K\ immersion $i:M\rightarrow  M'$ then  there
 exists a Sasakian immersion $\varphi: \BW(M)\rightarrow  \BW(M')$
such that $i\circ\pi=\pi'\circ\varphi$.
\end{proposition}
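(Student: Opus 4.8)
The plan is to obtain $\varphi$ as the tautological bundle map out of the pullback of $\BW(M')$ along $i$, after identifying that pullback with $\BW(M)$ by means of the uniqueness result Proposition~\ref{unique}; this makes Proposition~\ref{liftinglemma} the exact converse of the construction in Proposition~\ref{lemmareg}. Throughout I write $(g_M,\omega)$ and $(g_{M'},\omega')$ for the \K\ data on $M$ and $M'$, and I use that a \K\ immersion $i$ is a holomorphic isometric immersion, so that $i^*\omega'=\omega$, $i^*g_{M'}=g_M$ and $i$ intertwines the complex structures.

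First I would form the pulled-back principal $\mathbb{S}^1$-bundle $S_i:=i^*\BW(M')$ over $M$, with projection $\pi_i\colon S_i\to M$ and the canonical $\mathbb{S}^1$-equivariant bundle map $q\colon S_i\to\BW(M')$ covering $i$, i.e. $\pi'\circ q=i\circ\pi_i$. Endowing $S_i$ with the pullback connection $\eta_i:=q^*\eta'$ one gets
\[
d\eta_i=q^*d\eta'=q^*(\pi')^*\omega'=\pi_i^*\,i^*\omega'=\pi_i^*\omega .
\]
Thus $S_i\to M$ is a principal $\mathbb{S}^1$-bundle over the compact Hodge manifold $M$ whose connection form has curvature $\pi_i^*\omega$, so the Boothby--Wang construction equips it with a Sasakian structure $(\phi_i,\xi_i,\eta_i,g_i)$, with $\xi_i$ generating the $\mathbb{S}^1$-action, $g_i=\pi_i^*g_M+\eta_i\otimes\eta_i$, and $\phi_i$ the $\eta_i$-horizontal lift of the complex structure of $M$ (and $\phi_i\xi_i=0$). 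Since $S_i$ is compact (being a fibre bundle over the compact $M$ with fibre $\mathbb{S}^1$) and connected (as $M$ is) and $M$ is simply connected, Proposition~\ref{unique} applied to the two bundles $\pi\colon\BW(M)\to M$ and $\pi_i\colon S_i\to M$ yields a Sasakian equivalence $\Psi\colon\BW(M)\to S_i$ covering $\id_M$, i.e. $\pi_i\circ\Psi=\pi$. I would then set $\varphi:=q\circ\Psi$, which already satisfies $\pi'\circ\varphi=\pi'\circ q\circ\Psi=i\circ\pi_i\circ\Psi=i\circ\pi$.

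The core of the argument is checking that the bundle map $q$ is itself a Sasakian immersion; granting this, $\varphi$ is one as a composition of a Sasakian equivalence with a Sasakian immersion. By construction $q^*\eta'=\eta_i$ and $q_*\xi_i=\xi'$, so $q_*$ sends $\ker\eta_i$ into $\ker\eta'$ and is a linear isomorphism on the Reeb directions; together with $\pi'_*\circ q_*=i_*\circ(\pi_i)_*$, which is injective on $\ker\eta_i$ because $i$ is an immersion, this shows $q$ is an immersion. Isometry follows from $i^*g_{M'}=g_M$, since $q^*g'=q^*\bigl((\pi')^*g_{M'}+\eta'\otimes\eta'\bigr)=\pi_i^*g_M+\eta_i\otimes\eta_i=g_i$. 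Finally $q_*\circ\phi_i=\phi'\circ q_*$: on $\xi_i$ both sides vanish, while for $v\in\ker\eta_i$ both $q_*\phi_i v$ and $\phi' q_*v$ are $\eta'$-horizontal and have the same image under $\pi'_*$ (because $i$ is holomorphic), hence are equal by injectivity of $\pi'_*$ on $\ker\eta'$. So $q$ satisfies \eqref{sasakianimmersion1}--\eqref{sasakianimmersion2}.

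I expect the main obstacle to be precisely this last verification — that the pullback connection and the Boothby--Wang metric on $S_i$ are the literal $q$-pullbacks of the corresponding data on $\BW(M')$, so that $q$ is isometric and contact, and that $q$ intertwines the structure tensors because $i$ is a holomorphic isometry. A secondary point requiring care is that $S_i$ with its pullback connection really does satisfy the hypotheses of Proposition~\ref{unique} over $M$, so that the equivalence $\Psi$ exists and can be taken fibre-preserving; the curvature identity $d\eta_i=\pi_i^*\omega$ computed above is exactly what guarantees this. Everything else — compactness, connectedness, and the commuting-square identity for $\varphi$ — is formal.
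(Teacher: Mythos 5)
Your proposal is correct and follows essentially the same route as the paper: pull back $\BW(M')$ along $i$, equip the pullback with the induced Sasakian structure (whose connection form has curvature $\pi_i^*\omega$), identify it with $\BW(M)$ via Proposition~\ref{unique}, and compose with the bundle map. The only difference is that you spell out in detail the verification that the bundle map $q$ is a Sasakian immersion, which the paper leaves implicit.
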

\begin{proof}
Consider the  pull-back $\mathbb{S}^1$-bundle  $\hat B\stackrel{\hat\pi}{\rightarrow} M$ induced by  $i$ and let  $\psi: \hat B\rightarrow \BW(M')$ be the bundle map (such that $\pi'\circ \psi=i\circ\hat\pi$).
Since $i$ is a  \K\ immersion it follows  that $(\psi^*\eta', \psi^*g')$ is a Sasakian stucture on  $\hat B$ such that
$\hat\pi^*\omega =d (\psi^*\eta')$. As  $M$ is simply-connected, it follows by Propostion \ref{unique} that  there exists a diffeomorphism $F: \BW(M)\rightarrow \hat B$
such that $F^*\psi^*\eta'=\eta$ and $F^*\psi^*g'=g$. Hence $\varphi:=\psi\circ F$ is  the desired lifting.
\end{proof}

\begin{example}\rm\label{exsphere}
It is interesting to construct explicit  Sasakian immersions obtained as a lift of \K\ immersions.
For example if one considers the  Segre embedding (which is a \K\ embedding)
$$i:\C P^1\times \C P^1\rightarrow \C P^3: ([z_0, z_1], [w_0.w_1])\mapsto [z_0w_0, z_0w_1, z_1w_0, z_1w_1]$$
then the map
$$\varphi:T_1\mathbb{S}^3\cong\mathbb{S}^2\times \mathbb{S}^3\rightarrow\mathbb{S}^7: ([z_0, z_1],\xi_0, \xi_1)\mapsto \frac{(\xi_0z_0, \xi_0z_1, \xi_1z_0, \xi_1z_1)}{\sqrt{|z_0|^2+|z_1|^2}},$$
($\mathbb{S}^3=\{(\xi_0, \xi_1)\in\C^2 \ | \ \xi_0^2+\xi_1^2=1\}$ and $\mathbb{S}^2=\CP^1$),
is a Sasakian immersion, where $T_1\mathbb{S}^3\cong\mathbb{S}^2\times\mathbb{S}^3$ is equipped with an
 $\eta$-Einstein Sasakian  structure which can be also  obtained as a   $\mathcal{D}_a$-deformation of the standard
 homogeneous
 Sasaki--Einstein structure on  $T_1\mathbb{S}^3\cong\mathbb{S}^2\times\mathbb{S}^3$
 described in the introduction (cf. \cite{OrneaPiccinni} and Remark \ref{necdef} below).
\end{example}

\section{Proof of the main results}\label{proofs}
\begin{proof}[Proof of Theorem \ref{mainteor1}]
Let $\varphi:S\longrightarrow \mathbb{S}^{2N+1}$ be a Sasakian immersion. Then $\varphi$ induces a \K\ immersion
$\Phi=\varphi \times \textrm{Id}_{\mathbb{R}^{+}} :C(S)\longrightarrow \mathbb{C}^{N+1}\setminus \{0\}$ between the corresponding \K\ cones. As already pointed out in the introduction, it is well known (cf. \cite{bg}) that the \K\ cone $C(S)$ of a Sasaki-Einstein manifold is  Calabi-Yau, i.e.
the  \K\ metric on $C(S)$ is Ricci flat. By a result of Umehara \cite{um} a  Ricci flat metric on a \K\ manifold which admits a \K\ immersion
 into $\mathbb{C}^N$ (equipped with the flat metric) is forced to be flat.
Notice that the curvature tensors $R$ and $\bar{R}$ of the Riemannian manifolds $S$ and $C(S)$, respectively, are related by
\begin{equation*}
\quad \bar{R}\left(X,Y\right)Z=R\left(X,Y\right)Z + g(X,Z)Y - g(Y,Z)X
\end{equation*}
for any $X,Y\in\Gamma(TS)$ (see, for instance, \cite{adm}). Thus, being $C(S)$ flat, $S$ becomes a manifold of constant curvature $1$. By a result of Tanno (\cite{tan2}), locally the Sasakian structure of $S$ is isomorphic to the standard Sasakian structure of the $(2n+1)$-sphere. More precisely, being a complete Riemannian manifold of constant curvature $1$, $S$ is isometric to a quotient $\mathbb{S}^{2n+1} / \Gamma$ of a Euclidean sphere under a finite group of isometries (\cite{wolf}). We claim that $\Gamma$ is the identity group and so $S$ is Sasakian equivalent to $\mathbb{S}^{2n+1}$.
Indeed, let $\pi:\mathbb{S}^{2n+1}\rightarrow \mathbb{S}^{2n+1} / \Gamma$ be the universal covering map. Consider the  Sasakian immersion $f=\varphi\circ\pi:\mathbb{S}^{2n+1}\rightarrow \mathbb{S}^{2N+1}$ and let $i:\mathbb{S}^{2n+1}\hookrightarrow \mathbb{S}^{2N+1}$ be the standard totally geodesic embedding. Then $F=f\times \textrm{Id}_{\mathbb{R}^{+}}$ and $I=i\times \textrm{Id}_{\mathbb{R}^{+}}$ are two \K\ immersions from $\mathbb{C}^{n+1}\setminus \{0\}$ into $\mathbb{C}^{N+1}\setminus \{0\}$ (the latter is the natural inclusion). By the celebrated Calabi's rigidity theorem  (see \cite[Theorem 2]{Cal}) there exists a unitary transformation $U$ of $\mathbb{C}^{N+1}$
such that $U\circ F=I$. Therefore $F$,  and hence $f$,  is forced to be injective. Thus $\pi$ is injective and $\Gamma$ reduces to the identity group, proving our claim.
\end{proof}

\begin{proof}[Proof of Theorem \ref{mainteor3}]
It follows by Proposition \ref{lemmareg} and Example \ref{esfond} that  $S$ is regular and if  $M$ denotes the complex $n$-dimensional compact \K\ manifold given by the Boothby--Wang construction, it admits a \K\ immersion into $\CP^N$, with $N =n+2$. Since $S$ is  compact and   $\eta$-Einstein its base $M$ is a compact  K\"{a}hler-Einstein manifold (cf. \cite{bg}). By a result due to Tsukada \cite{ts} the codimension restriction forces
$M$ to be either the complex quadric  $Q_n\subset \CP^{n+1}$ or $\CP^n$ which are both simply-connected.
Hence the conclusion follows by Proposition \ref{unique}.
\end{proof}

\begin{proof}[Proof of Theorem \ref{mainteor4}]
Let $S$ be an $\eta$-Einstein Sasakian manifolds with Einstein constants $(\lambda,\nu)$ and assume by a contradiction that there exists
a Sasakian immersion  $\varphi:S\longrightarrow \mathbb{S}^{2N+1}$.
We  distinguish two cases:  $-2<\lambda<2n$ and $\lambda\leq -2$. Let us first suppose
 $-2<\lambda<2n$. A straightforward computation shows that the Ricci tensor of the Riemannian cone $C(S)$ of $S$ is given by
\begin{equation}\label{ricci-cone}
\textrm{Ric}_{C(S)}\left(\frac{d}{dr},\cdot\right)=0, \quad \textrm{Ric}_{C(S)}(X,Y)= -\nu\left(g(X,Y)-\eta(X)\eta(Y)\right)
\end{equation}
for any $X,Y\in\Gamma(TS)$. Using \eqref{ricci-cone} one can easily get  a local basis on $C(S)$ with respect to which the Ricci tensor of $C(S)$ is represented by the following matrix
\begin{equation}\label{ricci-matrix}
\textrm{diag}(-\nu,\ldots,-\nu,0,0)
\end{equation}
where the entry $-\nu$ is repeated $2n$ times. Now, our assumption that $-2<\lambda<2n$ together with \eqref{einstein-constants} yield that $0 < \nu < 2+2n$. In particular, in view of \eqref{ricci-matrix}, this implies that the Ricci tensor of the K\"{a}hler cone $C(S)$ is  not negative semidefinite. On the other hand, as in the proof of Theorem \ref{mainteor1}, $\varphi$ induces a \K\ immersion
$\Phi :C(S)\longrightarrow \mathbb{C}^{N+1}\setminus \{0\}$ between the corresponding \K\ cones. Hence by
the Gauss--Codazzi equations (see e.g. \cite[Prop. 9.5, Ch. IX]{kono}) one deduces that the Ricci tensor of $C(S)$ is negative semidefinite, yielding the desired contradiction. Assume now that  $\lambda\leq -2$ and let
$M$ be the K\"{a}hler manifold which corresponds to $S$ through the Boothby-Wang construction.
Using the O'Neill tensors of the theory of Riemannian submersions, one can easily prove that  $M$ is a compact \K--Einstein manifold with scalar curvature $2n(2 + \lambda)\leq 0$.
On the other hand, by Proposition \ref{lemmareg}  and Example \ref{esfond}, the existence of the  Sasakian immersion $\varphi: S\longrightarrow \mathbb{S}^{2N+1}$ would  give rise to a  \K\ immersion
from $M$ into  $\CP^N$. But a result of Hulin
\cite{hulinlambda}  asserts that the scalar curvature of a projectively induced \K-Einstein metric must be strictly positive,
in contrast with the inequality just proved.
\end{proof}

\begin{proof}[Proof of Theorem \ref{mainteor5}]
In order to prove the theorem notice first that  if  $S$ is  a compact homogeneous Sasakian manifold then the compact Hodge manifold $M$ corresponding to $S$ through the Boothby--Wang contruction is  a  compact homogeneous \K\ manifold.  By a  well-known result (see, e.g. \cite[Theorem 8.97]{BE}) $M$
is then  the \K\ product of a flat complex torus and a simply-connected compact homogeneous \K\ manifold and hence, in particular, its fundamental group is either infinite or trivial.

Assume now   that  $S$ admits a Sasakian immersion into a sphere $\mathbb{S}^{2N+1}$, for some $N$.
Then, by Proposition \ref{lemmareg}  and Example \ref{esfond},  $S$ is regular and $M$ admits a \K\ immersion into the  complex projective space $\CP^N$.
Thus   $M$ is forced to be simply-connected since the flat complex torus cannot admit a \K\ immersion into $\CP^N$ (see, e.g. \cite[Theorem 3]{ishi}).
Consider now  the long exact sequence  of homothopy groups associated to the Boothby--Wang fibration
$\pi :S\rightarrow M$:
\begin{equation}\label{exact}
\dots\rightarrow\pi_1(\mathbb{S}^1)\cong\Z\stackrel{\alpha}{\rightarrow}\pi_1(S)\stackrel{\beta}{\rightarrow}\pi_1(M)\rightarrow\pi_0(\mathbb{S}^1)=\{0\}\rightarrow\cdots
\end{equation}
The condition $\pi_1(M)=\{0\}$ implies that the map $\alpha: \Z\rightarrow \pi_1(S)$ is surjective. Thus  $\pi_1(S)$
is isomorphic to  either $\{0\}$, $\Z$ or $\Z_m$ for some integer $m>0$. The possibility $\pi_1(S)=\Z$ is excluded by
the fact that the first Betti number of a compact  Sasakian manifold must be even (\cite{futa}). Then  one implication of theorem follows.

Conversely,   assume that $S$ is a regular  compact homogeneous Sasakian manifold
whose fundamental group is either  trivial or finite cyclic. Let $M$ be the compact homogeneous Hodge manifold corresponding
to $S$ through the Boothby--Wang construction. By  the long exact sequence (\ref{exact}) and the surjectivity of
the map $\beta:\pi_1(S)\rightarrow \pi_1(M)$ we deduce that $\pi_1(M)$ is either trivial or finite cyclic.
Therefore $M$ is forced to be simply-connected since the fundamental group of a torus is not finite.
 Now, any simply-connected  homogeneous compact Hodge manifold admits a \K\ immersion into $\CP^N$, for some $N$ (see Theorem 1 in \cite{LZDGA}). Thus,  by  Proposition \ref{liftinglemma}, we can lift this \K\ immersion to a Sasakian immersion from $\BW (M)$ into $\mathbb{S}^{2N+1}$. Moreover, since  $M$ is simply-connected, then, up to a ${\mathcal D}_a$-homothetic deformation, $S=\BW(M)$ and we are done.
\end{proof}

\begin{remark}\rm\label{necdef}
To understand the necessity of a ${\mathcal D}_a$-homothetic deformation in Theorem \ref{mainteor5}, consider any compact  simply--connected homogeneous Hodge manifold $M$  with an integral  \K\--Einstein form, which has necessarily
strictly positive scalar curvature.  By the  aforementioned Theorem 1 in \cite{LZDGA} $M$ admits a \K\ immersion into
$\CP^N$, for some $N$, and then, by  Proposition \ref{liftinglemma},   its Boothby--Wang Sasakian manifold $\BW(M)$
admits a Sasakian immersion into $\mathbb{S}^{2N+1}$. Theorem \ref{mainteor4}
forces  $\BW(M)$ to be $\eta$-Einstein with $\lambda >2n$. Then, by  a suitable ${\mathcal D}_a$-homothetic deformation  of the Sasakian structure of $\BW (M)$ (cf. Remark \ref{exlambda}) we get a compact and homogeneous  $\eta$-Einstein Sasakian manifold  $S$ with $-2<\lambda_a<2n$ which, by Theorem \ref{mainteor4}, does not admit a Sasakian immersion into any sphere.  (Example \ref{exsphere} above is a particular case of this construction when $M=\CP^1\times \CP^1$).
\end{remark}

We end this paper with an explicit example of compact homogeneous Sasakian manifold with finite cyclic group admitting a Sasakian immersion into the sphere.

\begin{example}
If  $m$ is a positive integer then
$\BW (\C P^n, m\omega_{FS})$  is the Sasakian manifold given by the lens space $\mathbb{S}^{2n+1}\ /\Z_m$
(for $m=1$ one gets Example \ref{esfond} while, for $m=2$, one gets $SO(3)$ with the standard Sasakian structure).
Indeed, one can show (see, e.g. \cite[p. 908]{englisramadanov}) that the  boundary of the disk bundle of the $m$-th power $p:L^{*m}\rightarrow \CP^n$ of the tautological bundle over $\CP^n$ (cf. Example \ref{esfond} above) is diffeomorphic to   $\mathbb{S}^{2n+1}\ /\Z_m$ and
by  Ornea--Verbitsky \cite{OrneaVerbCR} one gets that the restriction
of $p$ to $\mathbb{S}^{2n+1}\ /\Z_m$ is indeed the Boothy--Wang fibration.
Now, since the fundamental group of  $\mathbb{S}^{2n+1}\ /\Z_m$ is $\Z_m$,
Theorem \ref{mainteor5} yields a Sasakian immersion of   $\mathbb{S}^{2n+1}\ /\Z_m$ into $\mathbb{S}^{2N+1}$, for some $N$. More precisely, this immersion is the lift of  the  \K\  immersion,
$V_m:{(\C}P^n, m\omega_{FS})\rightarrow ({\C}P^{\left({n+m\atop n}\right)}, \omega_{FS})$
obtained by a suitable rescaling of the Veronese embedding
(see \cite[Theorem 13]{Cal}) (hence, in this case,    $N=2\left({n+m\atop n}\right)+1$).
\end{example}

\end{document}